\documentclass[11pt,a4paper]{amsart}

\usepackage[english]{babel}
\usepackage[T1]{fontenc}
\usepackage[utf8]{inputenc}
\usepackage{amsmath}
\usepackage{amssymb}
\usepackage{amsthm}
\usepackage{latexsym}
\usepackage{amsfonts}

\usepackage[onehalfspacing]{setspace} 
\onehalfspacing 
\setdisplayskipstretch{.421} 
\newlength{\abovebis} 
\setlength{\abovebis}{\abovedisplayskip} 
\newlength{\belowbis} 
\setlength{\belowbis}{\belowdisplayskip} 
\newlength{\aboveshortbis} 
\setlength{\aboveshortbis}{\abovedisplayshortskip} 
\newlength{\belowshortbis} 
\setlength{\belowshortbis}{\belowdisplayshortskip} 
\everydisplay\expandafter{%
  \the\everydisplay 
  \advance\abovedisplayskip\abovebis 
  \advance\belowdisplayskip\belowbis 
  \advance\abovedisplayshortskip\aboveshortbis 
  \advance\belowdisplayshortskip\belowshortbis 
}

\allowdisplaybreaks[3]

\def\R{\mathbb{R}}

\def\N{\mathbb{N}}
\def\C{\mathbb{C}}
\def\matn{M_{n}(\C)}
\def\matrn{M_{n}(\R)}

\def\Ree{\mathrm{Re}}
\def\Imm{\mathrm{Im}}

\theoremstyle{plain}

\newtheorem{lem}{Lemma}[section]
\newtheorem{theo}[lem]{Theorem}
\newtheorem{prop}[lem]{Proposition}

\theoremstyle{definition}

\numberwithin{equation}{section}


\begin{document}
\title[Global multi-channel Stability in 2D]{Global stability for the multi-channel Gel'fand-Calder\'on inverse problem in two dimensions}
\author{Matteo Santacesaria}
\address[M. Santacesaria]{Centre de Mathématiques Appliquées, \'Ecole Polytechnique, 91128, Palaiseau, France}
\email{santacesaria@cmap.polytechnique.fr}
\begin{abstract}
We prove a global logarithmic stability estimate for the multi-channel Gel'fand-Calder\'on inverse problem on a two-dimensional bounded domain, i.e. the inverse boundary value problem for the equation $-\Delta \psi + v\, \psi = 0$ on $D$, where $v$ is a smooth matrix-valued potential defined on a bounded planar domain $D$.
\end{abstract}

\maketitle

\section{Introduction}
The Schr\"odinger equation at zero energy
\begin{equation} \label{equa}
-\Delta \psi + v(x) \psi =0 \; \; \textrm{on } D \subset \R^2
\end{equation}
arises in quantum mechanics, acoustics and electrodynamics. The reconstruction of the complex-valued potential $v$ in equation \eqref{equa} through the Dirichlet-to-Neumann operator is one of the most studied inverse problems (see \cite{N1}, \cite{M}, \cite{B}, \cite{N2}, \cite{NS}, \cite{NS2} and references therein).

In this article we consider the multi-channel two-dimensional Schr\"odinger equation, i.e. equation \eqref{equa} with matrix-valued potentials and solutions; this case was already studied in \cite{X, NS2}. One of the motivations for studying the multi-channel equation is that it comes up as a 2D-approximation for the 3D equation (see \cite[Sec. 2]{NS2}).

This paper is devoted to give a global stability estimate for this inverse problem in the multi-channel case, which is highly related to the reconstruction method of \cite{NS2}.\smallskip

Let $D$ be an open bounded domain in $\R^2$ with $C^2$ boundary and $v \in C^1(\bar D, M_{n}(\C))$, where $\matn$ is the set of the $n \times n$ complex-valued matrices. The Dirichlet-to-Neumann map associated to $v$ is the operator $\Phi : C^1(\partial D, \matn) \to L^p(\partial D, \matn), \; p < \infty$ defined by:
\begin{equation}
\Phi(f) = \left.\frac{\partial \psi}{\partial \nu}\right|_{\partial D}
\end{equation}
where $f \in C^1(\partial D, \matn)$, $\nu$ is the outer normal of $\partial D$ and $\psi$ is the $H^1 (\bar D, \matn)$-solution of the Dirichlet problem
\begin{equation} \label{equa1}
-\Delta \psi + v(x) \psi =0 \; \; \textrm{on } D, \; \; \psi|_{\partial D} = f;
\end{equation}
here we assume that
\begin{equation} \label{direig}
0 \textrm{ is not a Dirichlet eigenvalue for the operator } - \Delta + v \textrm{ in } D.
\end{equation}
The following inverse boundary value problem arises from this construction: given $\Phi$, find $v$.

This problem can be considered as the Gel’fand inverse boundary value problem for the multi-channel Schr\"odinger equation at zero energy (see \cite{G}, \cite{N1}) and can also be seen as a generalization of the Calder\'on problem for the electrical impedance tomography (see \cite{C}, \cite{N1}). Note also that we can think of this problem as a model for the monochromatic ocean tomography (e.g. see \cite{Bur} for similar problems arising in this tomography).\smallskip

In the case of complex-valued potentials the global injectivity of the map $v \to \Phi$ was firstly proved in \cite{N1} for $D \subset \R^d$ with $d \geq 3$ and in \cite{B} for $d = 2$ with $v \in L^p$: in particular, these results were obtained by the use of global reconstructions developed in the same papers. The first global uniqueness result (along with an exact reconstruction method) for matrix-valued potentials was given in \cite{NS2}, which deals with $C^1$ matrix-valued potentials defined on a domain in $\R^2$. A global stability estimate for the Gel'fand-Calder\'on problem for $d \geq 3$ was found for the first time by Alessandrini in \cite{A}; this result was recently improved in \cite{N2}. In the two-dimensional case the first global stability estimate was given in \cite{NS}.

In this paper we extend the results of \cite{NS} to the matrix-valued case; we do not discuss global results for special real-valued potentials arising from conductivities: for this case the reader is referred to the references given in \cite{A}, \cite{B}, \cite{M}, \cite{N1}, \cite{N2}, \cite{NS}.

Our main result is the following:
\begin{theo} \label{theo1}
Let $D \subset \R^2$ be an open bounded domain with $C^2$ boundary, let $v_1 , v_2 \in C^2(\bar D,\matn)$ be two matrix-valued potentials which satisfy \eqref{direig}, with $\|v_j \|_{C^2(\bar D)} \le N$ for $j =1,2$, and $\Phi_1 , \Phi_2$ the corresponding Dirichlet-to-Neumann operators. For simplicity we assume also that $v_j|_{\partial D} = 0$ and $\frac{\partial}{\partial \nu} v_j|_{\partial D}=0$ for $j=1,2$. Then there exists a constant $C = C(D, N, n)$ such that
\begin{equation} \label{est}
\|v_2 - v_1\|_{L^{\infty}(D)} \leq C \left( \log(3 + \|\Phi_2 - \Phi_1\|^{-1} ) \right)^{-\frac 3 4} \left( \log (3 \log(3+\| \Phi_2 -\Phi_1\|^{-1}))\right)^2,
\end{equation}
where $\| A\|$ denotes the norm of an operator $A : L^{\infty}(\partial D, \matn) \to$ $L^{\infty}(\partial D, \matn)$ and $\|v\|_{L^{\infty}(D)} = \max_{1 \leq i,j \leq n} \| v_{i,j}\|_{L^{\infty}(D)}$ (likewise for $\|v \|_{C^2(\bar D)}$) for a ma\-trix-valued potential $v$.
\end{theo}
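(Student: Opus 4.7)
The plan is to adapt the Bukhgeim/Novikov--Santacesaria strategy to the matrix-valued setting, the central tool being matrix-valued Faddeev-type complex geometric optics (CGO) solutions and their associated generalized scattering data, analyzed via a non-commutative $\bar\partial$-equation in the spectral parameter $\lambda\in\C$.

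First I would set up a matrix-valued Alessandrini-type identity: extending $v_j$ by zero outside $D$ (the boundary vanishing assumptions on $v_j$ and $\partial_\nu v_j$ give a $C^2$ extension to all of $\R^2$), one writes
\begin{equation*}
\int_D \psi_2^{\,T}(x,\lambda)\,\bigl(v_2(x)-v_1(x)\bigr)\,\psi_1(x,\lambda)\,dx
= \bigl\langle \psi_2|_{\partial D},(\Phi_2-\Phi_1)\psi_1|_{\partial D}\bigr\rangle,
\end{equation*}
where $\psi_1$ solves $(-\Delta+v_1)\psi_1=0$ and $\psi_2$ solves the transposed equation $(-\Delta+v_2^T)\psi_2=0$. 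For $\psi_j$ I use Faddeev-type CGO solutions $\psi_j(x,\lambda)=e^{i\lambda z}\mu_j(x,\lambda)$ (with $z=x_1+ix_2$, $\lambda\in\C$), $\mu_j\to I_n$ as $|\lambda|\to\infty$. These exist for all sufficiently large $|\lambda|$ by a standard contraction in a weighted space, as in \cite{NS2}; care must be taken because matrix multiplication does not commute, but the integral equation for $\mu_j$ only involves left multiplication by $v_j$, so the scalar existence/uniqueness argument carries over.

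Next I define the Faddeev-type scattering function $h_j(\lambda)\in\matn$ and its boundary analog, depending on $\psi_j|_{\partial D}$ only (cf.\ \cite{NS2}). The key two steps are:
\begin{itemize}
\item[(i)] \emph{Boundary control.} Using the identity above, together with boundary integral equations of Fredholm type that express $\psi_j|_{\partial D}$ from $\Phi_j$ (whose invertibility for $|\lambda|$ outside a small exceptional set is inherited from \cite{NS2}), one shows $\|h_2(\lambda)-h_1(\lambda)\|\leq C(\rho)\,\|\Phi_2-\Phi_1\|$ for $|\lambda|\leq\rho$, with $C(\rho)$ growing at most exponentially in $\rho$.
\item[(ii)] \emph{Reconstruction-type formula.} Relate $v_2-v_1$ to $h_2-h_1$. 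Using the matrix $\bar\partial$-equation $\partial_{\bar\lambda}\mu_j=\overline{e_\lambda}\,r_j(\lambda)\mu_j$ (where $e_\lambda(x)=e^{i(\lambda z+\bar\lambda\bar z)}$), one writes $v_2-v_1$ as a sum of an integral of $h_2-h_1$ over the disk $|\lambda|\leq\rho$ and an error term involving large $|\lambda|$.
\end{itemize}

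The large-$\lambda$ error is controlled by the $C^2$ regularity of $v_j$: classical Faddeev asymptotics, now in the matrix setting, give $|\mu_j(x,\lambda)-I|=O(|\lambda|^{-1})$ and the remainder in the reconstruction formula is $O(\rho^{-\alpha})$ for some $\alpha>0$ depending on the smoothness. Combining this with (i) gives
\begin{equation*}
\|v_2-v_1\|_{L^\infty}\leq C_1\,e^{C_2\rho^2}\,\|\Phi_2-\Phi_1\|+C_3\,\rho^{-\alpha},
\end{equation*}
and optimizing in $\rho$ yields the stated double-logarithmic estimate, with the exponent $3/4$ and the correction $(\log\log)^2$ arising, as in \cite{NS}, from the careful tracking of the Faddeev remainders at order two in $1/|\lambda|$.

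The main technical obstacle is the non-commutativity: the scalar generalized analytic function theory (Vekua) used in \cite{NS,B} must be replaced by its matrix-valued analogue in step (ii) and in analyzing the $\bar\partial$-equation for $h_j$. Concretely, symmetry-based cancellations available in the scalar case (e.g. $\psi_1\psi_2=\psi_2\psi_1$) are no longer valid, and one must systematically use the transposed equation for $\psi_2$ and the matrix Cauchy integral operators, checking that all norm estimates used in the scalar proof survive with constants depending polynomially on $n$. Once this bookkeeping is carried out, the optimization of $\rho$ is purely scalar and identical to \cite{NS}.
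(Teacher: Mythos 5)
There is a genuine gap, and it lies at the heart of your plan: you propose Faddeev-type CGO solutions with \emph{linear} phase, $\psi_j = e^{i\lambda z}\mu_j$, together with a $\bar\partial$-equation in the spectral parameter and a reconstruction formula integrating the scattering data $h_2-h_1$ over a disk $|\lambda|\le\rho$. For the two-dimensional Schr\"odinger equation at zero energy with a \emph{general} (complex- or matrix-valued) potential this machinery is not available: the linear exponentials only probe a two-real-dimensional family of frequencies, and no reconstruction formula of the type you invoke in step (ii) is known in this setting. This is precisely the obstruction that Bukhgeim's quadratic phases were introduced to circumvent, and it is what the paper actually uses: for each $z_0\in D$ one takes $\psi_{z_0}(z,\lambda)=e^{\lambda(z-z_0)^2}\mu_{z_0}(z,\lambda)$, so that after inserting $u_1=\psi_{1,z_0}(\cdot,\lambda)$ and $u_2=\overline{\psi}_{2,z_0}(\cdot,-\lambda)$ (the latter solving the transposed equation) into the matrix Alessandrini identity, the product of exponentials becomes $e^{\lambda(z-z_0)^2-\bar\lambda(\bar z-\bar z_0)^2}$, a purely oscillatory factor with a nondegenerate stationary point at $z_0$. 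Stationary phase (Lemma \ref{lem2}) then recovers $v_2(z_0)-v_1(z_0)$ pointwise as $\lambda\to\infty$, with no $\bar\partial$-equation in $\lambda$, no exceptional set, and no boundary integral equations: the boundary term is bounded directly by $e^{(2L^2+1)|\lambda|}\|\Phi_2-\Phi_1\|$.

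Your quantitative conclusion also does not follow from your own setup. Optimizing $C_1e^{C_2\rho^2}\|\Phi_2-\Phi_1\|+C_3\rho^{-\alpha}$ forces $\rho\sim(\log\varepsilon^{-1})^{1/2}$ and yields a bound of order $(\log\varepsilon^{-1})^{-\alpha/2}$; it cannot produce the specific form $\left(\log(3+\varepsilon^{-1})\right)^{-3/4}\left(\log(3\log(3+\varepsilon^{-1}))\right)^2$. In the paper that form comes from an error term $(\log 3|\lambda|)^2/|\lambda|^{3/4}$ balanced against $e^{(2L^2+1)|\lambda|}\varepsilon$ with the choice $|\lambda|=\gamma\log(3+\varepsilon^{-1})$ (note the exponential is linear, not quadratic, in $|\lambda|$), and the exponent $3/4$ is the content of the new Lemma \ref{lem6}: a bilinear estimate for $W(u,v)(\lambda)=\int_D e^{\lambda(z-z_0)^2-\bar\lambda(\bar z-\bar z_0)^2}u\,(g_{z_0,\lambda}v)\,d\Ree z\,d\Imm z$ obtained by splitting $D$ near and away from $z_0$, integrating by parts, and using pointwise decay of $\bar T_{z_0,\lambda}v$ away from $z_0$. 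Nothing in your outline produces this ingredient. Your matrix Alessandrini identity with the transposed potential for $\psi_2$ is the right idea and matches the paper, and your remarks about non-commutativity being handled because the integral equation only left-multiplies by $v$ are sound; but the analytic core of the argument needs to be rebuilt around the quadratic-phase, stationary-phase framework.
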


This is the first global stability result for the multi-channel ($n \geq 2$) Gel'fand-Calder\'on inverse problem in two dimension. In addition, Theorem \ref{theo1} is new also for the scalar case, as the estimate obtained in \cite{NS} is weaker.

Instability estimates complementing the stability estimates of \cite{A}, \cite{N2}, \cite{NS} and of the present work are given in \cite{M}, \cite{I1}.\smallskip

The proof of Theorem \ref{theo1} is based on results obtained in \cite{NS}, \cite{NS2}, which takes inspiration mostly from \cite{B} and \cite{A}. In particular, for $z_0 \in D$ we use the existence and uniqueness of a family of solution $\psi_{z_0}(z,\lambda)$ of equation \eqref{equa} where in particular $\psi_{z_0} \to e^{\lambda(z - z_0)^2}I$, for $\lambda \to \infty$ (where $I$ is the identity matrix). Then, using an appropriate matrix-valued version of Alessandrini's identity along with stationary phase techniques, we obtain the result. Note that this matrix-valued identity is one of the new results of this paper.
\smallskip

A generalization of Theorem \ref{theo1} in the case where we do not assume that $v_j|_{\partial D} = 0$ and $\frac{\partial}{\partial \nu} v_j|_{\partial D}=0$ for $j=1,2$, is given in section \ref{secext}.\bigskip

This work was fulfilled in the framework of researches under the direction of R. G. Novikov.

\section{Preliminaries} \label{secprel}

In this section we introduce and give details about the above-mentioned family of solutions of equation \eqref{equa}, which will be used throughout all the paper.
\smallskip

We identify $\R^2$ with $\C$ and use the coordinates $z= x_1 + i x_2, \; \bar z = x_1 - i x_2$ where $(x_1, x_2) \in \R^2$. 
Let us define the function spaces $C^1_{\bar z}(\bar D) = \{ u : u, \frac{\partial u}{ \partial \bar z} \in C(\bar D, \matn) \}$ with the norm $\|u \|_{C^1_{\bar z}(\bar D)} = \max ( \|u\|_{C(\bar D)}, \| \frac{\partial u}{\partial \bar z} \|_{C(\bar D)} )$, where $\|u\|_{C(\bar D)} = \sup_{z \in \bar D} |u|$ and $|u| = \max_{1 \leq i,j \leq n} |u_{i,j}|$; we define also $C^1_{z}(\bar D) = \{ u : u, \frac{\partial u}{ \partial z} \in C(\bar D, \matn) \}$ with an analogous norm. Following \cite{NS}, \cite{NS2}, we consider the functions:
\begin{align} \label{eq11}
G_{z_0}(z,\zeta, \lambda) &= e^{\lambda (z-z_0)^2}g_{z_0} (z,\zeta,\lambda)e^{-\lambda (\zeta-z_0)^2},\\ \label{eq12}
g_{z_0}(z,\zeta, \lambda) &= \frac{e^{\lambda(\zeta-z_0)^2-\bar \lambda(\bar \zeta - \bar z_0)^2}}{4 \pi^2} \int_D \frac{e^{-\lambda(\eta -z_0)^2+\bar \lambda(\bar \eta -\bar z_0)^2}}{(z- \eta)(\bar \eta -\bar \zeta)} d\Ree \eta \, d \Imm \eta, \\ \label{eq13}
\psi_{z_0} (z,\lambda) &= e^{\lambda (z-z_0)^2} \mu_{z_0}(z,\lambda), \\ \label{eq14}
\mu_{z_0} (z,\lambda) &= I + \int_D g_{z_0} (z,\zeta, \lambda) v(\zeta) \mu_{z_0} (\zeta, \lambda) d \mathrm{Re}\zeta \, d \mathrm{Im} \zeta ,\\ \label{eq15}
h_{z_0} (\lambda) &= \int_D e^{\lambda (z-z_0)^2 - \bar \lambda (\bar z - \bar z_0)^2} v(z) \mu_{z_0} (z,\lambda) d \mathrm{Re}z \, d \mathrm{Im} z,
\end{align}
where $z , z_0, \zeta \in D$ and $\lambda \in \C$ and $I$ is the identity matrix. In addition, equation \eqref{eq14} at fixed $z_0$ and $\lambda$, is considered as a linear integral equation for $\mu_{z_0}( \cdot, \lambda) \in C^1_{\bar z} (\bar D)$.
The functions $G_{z_0}(z,\zeta, \lambda), \; g_{z_0}(z,\zeta, \lambda), \; \psi_{z_0}(z,\lambda), \; \mu_{z_0}(z,\lambda)$ defined above, satisfy the following equations (see \cite{NS}, \cite{NS2}):

\begin{align} \label{eq21}
4 \frac{\partial^2}{\partial z \partial \bar z} &G_{z_0} (z, \zeta, \lambda) = \delta(z-\zeta), \\
4\frac{\partial^2}{\partial \zeta \partial \bar \zeta} &G_{z_0}(z,\zeta,\lambda) = \delta(\zeta-z),\\ \label{eq22}
4\left(\frac{\partial}{\partial z} + 2\lambda (z-z_0) \right) \frac{\partial}{\partial \bar z} &g_{z_0}(z,\zeta, \lambda) = \delta(z-\zeta), \\
4\frac{\partial}{\partial \bar \zeta} \left(\frac{\partial}{\partial \zeta} - 2\lambda (\zeta-z_0) \right) &g_{z_0}(z,\zeta, \lambda) = \delta(\zeta-z),\\ \label{eq23}
-4 \frac{\partial^2}{\partial z \partial \bar z} &\psi_{z_0} (z,\lambda) + v(z) \psi_{z_0}(z,\lambda) = 0,\\ \label{eq24}
-4 \left(\frac{\partial}{\partial z} + 2\lambda (z-z_0) \right) \frac{\partial}{\partial \bar z} &\mu_{z_0}(z, \lambda) + v(z) \mu_{z_0} (z,\lambda) = 0,
\end{align}
where $z, z_0 , \zeta \in D$, $\lambda \in \C$, $\delta$ is the Dirac's delta. (In addition, it is assumed that \eqref{eq14} is uniquely solvable for $\mu_{z_0}( \cdot, \lambda) \in C^1_{\bar z} (\bar D)$ at fixed $z_0$ and $\lambda$.) 

We say that the functions $G_{z_0}$, $g_{z_0}$, $\psi_{z_0}$, $\mu_{z_0}$, $h_{z_0}$ are the Bukhgeim-type analogues of the Faddeev functions (see \cite{NS2}).
\smallskip

Now we state some fundamental lemmata.
Let
\begin{equation} \label{green}
g_{z_0, \lambda} u(z) = \int_D g_{z_0} (z, \zeta, \lambda) u(\zeta) d \mathrm{Re}\zeta \, d \mathrm{Im}\zeta, \; z \in \bar D, \; z_0, \lambda \in \C,
\end{equation}
where $g_{z_0}(z, \zeta, \lambda)$ is defined by \eqref{eq12} and $u$ is a test function.

\begin{lem}[\cite{NS}] \label{lem1}
Let $g_{z_0, \lambda} u$ be defined by \eqref{green}. Then, for $z_0, \lambda \in \C$, the following estimates hold:
\begin{align} \label{contg}
&g_{z_0, \lambda} u \in C^1_{\bar z}(\bar D), \quad \textrm{for} \; u \in C(\bar D), \\ \label{estcontg1}
\| &g_{z_0, \lambda} u \|_{C^1(\bar D)} \leq c_1(D,\lambda) \|u\|_{C(\bar D)}, \quad \textrm{for} \; u \in C(\bar D), \\ \label{est1}
\| &g_{z_0, \lambda} u \|_{C^1_{\bar z}(\bar D)} \leq \frac{c_2(D)}{|\lambda|^{\frac 1 2}} \|u \|_{C^1_{\bar z}(\bar D)}, \quad \textrm{for} \; u \in C^1_{\bar z}(\bar D), \; |\lambda| \geq 1. 
\end{align}
\end{lem}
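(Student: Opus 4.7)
The plan is to represent $g_{z_0,\lambda}u$ as an iterated Cauchy integral with a unimodular oscillatory weight, and then to derive (1)--(2) from classical continuity of the Pompeiu operator and (3) from integration by parts against the oscillation. Substituting \eqref{eq12} into \eqref{green} and applying Fubini, I would rewrite
\[
g_{z_0,\lambda}u(z) = \frac{1}{4\pi^2}\int_D \frac{e^{-\phi(\eta)}}{z-\eta}\,J(\eta)\,d\Ree\eta\,d\Imm\eta, \quad J(\eta) := \int_D \frac{e^{\phi(\zeta)}\,u(\zeta)}{\bar\eta-\bar\zeta}\,d\Ree\zeta\,d\Imm\zeta,
\]
where $\phi(\zeta) := \lambda(\zeta-z_0)^2 - \bar\lambda(\bar\zeta-\bar z_0)^2$ is purely imaginary, so $|e^{\phi}|\equiv 1$.

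For parts (1) and (2) the oscillation plays no role: the inner and outer integrals are Cauchy--Pompeiu transforms of bounded functions, and the operator $f\mapsto-\pi^{-1}\int_D f(\zeta)(\zeta-z)^{-1}\,d\Ree\zeta\,d\Imm\zeta$ is known to map $L^\infty(D)$ continuously into every $C^\alpha(\bar D)$, with $\partial_{\bar z}$-derivative equal to $f$. Iterating these classical facts, and noting that the $\lambda$-dependence enters only through the smooth factors $e^{\pm\phi(\eta)}$, would yield $g_{z_0,\lambda}u\in C^1_{\bar z}(\bar D)$ together with a bound of the form $c_1(D,\lambda)\|u\|_{C(\bar D)}$.

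For part (3) the decisive step would be to exploit the identity $e^{\phi(\zeta)} = -(2\bar\lambda(\bar\zeta-\bar z_0))^{-1}\partial_{\bar\zeta}e^{\phi(\zeta)}$, valid for $\zeta\neq z_0$: I would substitute it into $J(\eta)$ and integrate by parts in $\bar\zeta$. Because $(\bar\zeta-\bar z_0)^{-1}$ fails to be locally square-integrable near $z_0$, the integration by parts is carried out only on $D\setminus B_\varepsilon(z_0)$. The contribution of the disk $B_\varepsilon(z_0)$ to $J(\eta)$ is bounded directly by $C\varepsilon\|u\|_{C(\bar D)}$, uniformly in $\eta$, thanks to the local integrability of $|\bar\eta-\bar\zeta|^{-1}$. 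The boundary circle $|\zeta-z_0|=\varepsilon$ contributes $O(|\lambda|^{-1})$, due to the cancellation between the arc-length factor $\varepsilon$ and the singular factor $(\bar\zeta-\bar z_0)^{-1}$, while the volume remainder, which involves $\partial_{\bar\zeta}u$ (available because $u\in C^1_{\bar z}$) together with weakly singular kernels coming from differentiating the amplitude, is of order $|\lambda|^{-1}\log(1/\varepsilon)\|u\|_{C^1_{\bar z}}$. Choosing $\varepsilon$ of order $|\lambda|^{-1/2}$ balances the two regimes and yields $|J(\eta)|\le c_2(D)|\lambda|^{-1/2}\|u\|_{C^1_{\bar z}}$ uniformly in $\eta$. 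Plugging this back into the outer representation gives the $C(\bar D)$-bound on $g_{z_0,\lambda}u$; using the distributional identity $\partial_{\bar z}[(z-\eta)^{-1}] = \pi\delta(z-\eta)$ to derive $\partial_{\bar z}(g_{z_0,\lambda}u)(z) = (4\pi)^{-1}e^{-\phi(z)}J(z)$ delivers the matching bound on $\partial_{\bar z}(g_{z_0,\lambda}u)$.

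The hardest step will be the careful bookkeeping of the volume remainder after integration by parts, in particular the treatment of the most singular piece $(\bar\zeta-\bar z_0)^{-1}(\bar\eta-\bar\zeta)^{-2}$, whose $(\bar\eta-\bar\zeta)^{-2}$-factor is of Beurling--Ahlfors type and must be regularised as a principal value. Verifying that every boundary contribution on $|\zeta-z_0|=\varepsilon$ remains uniformly bounded in $\varepsilon$, thanks to the precise dimensional matching between the arc length and $(\bar\zeta-\bar z_0)^{-1}$, is the other delicate point. The hypothesis $u\in C^1_{\bar z}$ (rather than merely $u\in C$) is used in an essential way through the remainder containing $\partial_{\bar\zeta}u$; it is this extra regularity that upgrades the qualitative bound of (2) to the quantitative decay $|\lambda|^{-1/2}$ of (3).
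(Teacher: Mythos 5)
The paper does not reprove this lemma (it is quoted from \cite{NS}), but the machinery it imports from there --- the factorization $g_{z_0,\lambda}=\tfrac14 T_{z_0,\lambda}\bar T_{z_0,\lambda}$, the key intermediate bound \eqref{estbar} on $\bar T_{z_0,\lambda}$, and the identity $\partial_{\bar z}(g_{z_0,\lambda}u)=\tfrac14 e^{-\lambda(z-z_0)^2+\bar\lambda(\bar z-\bar z_0)^2}\bar T_{z_0,\lambda}u$ --- is exactly what you reconstruct: your $J(\eta)$ is $\pi\,\bar T_{z_0,\lambda}u(\eta)$, your claimed bound $|J|\le c|\lambda|^{-1/2}\|u\|_{C^1_{\bar z}}$ is \eqref{estbar}, and your treatment of parts (1)--(2) via the Pompeiu operator and of part (3) via excision of $B_\varepsilon(z_0)$, integration by parts against the phase, and the balance $\varepsilon\sim|\lambda|^{-1/2}$ is the intended route. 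So at the level of strategy this is the same proof.

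There is, however, a concrete gap at the most singular terms, which you flag but do not close, and which as written would fail. First, the boundary contribution on $|\zeta-z_0|=\varepsilon$ is \emph{not} uniformly $O(|\lambda|^{-1})$ in $\eta$: the integrand still carries the Cauchy factor $(\bar\eta-\bar\zeta)^{-1}$, and the line integral $\oint_{|\zeta-z_0|=\varepsilon}|\eta-\zeta|^{-1}\,|d\zeta|$ diverges logarithmically as $\eta$ approaches the circle, so the asserted cancellation between arc length and $(\bar\zeta-\bar z_0)^{-1}$ does not by itself give a uniform bound. Second, and more seriously, the volume term $\mathrm{p.v.}\int u(\zeta)e^{\phi(\zeta)}(\bar\eta-\bar\zeta)^{-2}(\bar\zeta-\bar z_0)^{-1}$ is a Beurling-type transform of a merely \emph{bounded} function, and such transforms are not bounded on $L^\infty$; declaring it "regularised as a principal value" does not produce the required uniform $O(|\lambda|^{-1/2})$ estimate, and a crude absolute-value bound after excising $B_\varepsilon(\eta)$ costs an extra $\log(1/\varepsilon)$ that would spoil the clean $|\lambda|^{-1/2}$ in \eqref{est1} (note that in \eqref{estbar} the logarithm multiplies only the $\|\partial_{\bar z}u\|$-term, not the $\|u\|_{C}$-term). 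The standard repair is to excise a second disk $B_\varepsilon(\eta)$ \emph{before} integrating by parts --- its direct contribution is another harmless $O(\varepsilon\|u\|_C)$ --- so that neither the $(\bar\eta-\bar\zeta)^{-2}$ kernel nor the circle integral is ever evaluated at the singularity, and then to track the two inner boundary circles and the three volume kernels $(\bar\eta-\bar\zeta)^{-1}(\bar\zeta-\bar z_0)^{-2}$, $(\bar\eta-\bar\zeta)^{-2}(\bar\zeta-\bar z_0)^{-1}$, $(\bar\eta-\bar\zeta)^{-1}(\bar\zeta-\bar z_0)^{-1}\partial_{\bar\zeta}u$ separately; only the last one is allowed to carry the $\log(3|\lambda|)/|\lambda|$ weight. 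Until that bookkeeping is actually carried out, the quantitative claim \eqref{est1} is not established.
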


Given a potential $v \in C^1_{\bar z}(\bar D)$ we define the operator $g_{z_0,\lambda} v$ simply as $(g_{z_0,\lambda} v) u(z) = g_{z_0,\lambda} w(z), \; w=vu$, for a test function $u$. If $u \in C^1_{\bar z}(\bar D)$, by Lemma \ref{lem1} we have that $g_{z_0,\lambda} v : C^1_{\bar z}(\bar D)  \to C^1_{\bar z}(\bar D)$,
\begin{equation} \label{estsolmu}
\| g_{z_0,\lambda} v \|^{op}_{C^1_{\bar z}(\bar D)} \leq 2n \| g_{z_0,\lambda} \|^{op}_{C^1_{\bar z}(\bar D)} \|v\|_{C^1_{\bar z}(\bar D)},
\end{equation}
where $\| \cdot \|^{op}_{C^1_{\bar z}(\bar D)}$ denotes the operator norm in $C^1_{\bar z}(\bar D)$, $z_0, \lambda \in \C$. In addition, $\| g_{z_0,\lambda} \|^{op}_{C^1_{\bar z}(\bar D)}$ is estimated in Lemma \ref{lem1}. Inequality \eqref{estsolmu} and Lemma \ref{lem1} imply existence and uniqueness of $\mu_{z_0}(z, \lambda)$ (and thus also $\psi_{z_0}(z,\lambda)$) for $|\lambda| > \rho(D,K,n)$, where $\|v\|_{C^1_{\bar z}(\bar D)} < K$.
\smallskip

Let
\begin{align*}
\mu^{(k)}_{z_0}(z, \lambda) &= \sum_{j=0}^k (g_{z_0,\lambda} v)^j I, \\
h^{(k)}_{z_0}(\lambda) &= \int_D e^{\lambda (z-z_0)^2 -\bar \lambda (\bar z- \bar z_0)^2}v(z) \mu^{(k)}_{z_0}(z,\lambda) d\Ree z \, d \Imm z,
\end{align*} 
where $z,z_0 \in D$, $\lambda \in \C$, $k \in \N \cup \{ 0 \}$.

\begin{lem}[\cite{NS}] \label{lem2}
For $v \in C^1_{\bar z}(\bar D)$ such that $v|_{\partial D} = 0$ the following formula holds:
\begin{equation} \label{estp}
v(z_0) = \frac{2}{\pi} \lim_{\lambda \to \infty} |\lambda| h^{(0)}_{z_0}(\lambda), \; z_0 \in D.
\end{equation}
In addition, if $v \in C^2(\bar D)$, $v|_{\partial D}= 0$ and $\frac{\partial v}{\partial \nu}|_{\partial D} = 0$ then
\begin{equation} \label{estm}
\left|v(z_0) - \frac{2}{\pi}|\lambda| h^{(0)}_{z_0}(\lambda)\right| \leq c_3(D,n) \frac{\log(3|\lambda|)}{|\lambda|}\|v\|_{C^2(\bar D)},
\end{equation}
for $z_0 \in D$, $\lambda \in \C$, $|\lambda| \geq 1$.
\end{lem}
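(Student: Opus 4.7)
The plan is a stationary-phase analysis of the oscillatory integral defining $h^{(0)}_{z_0}$. Since $\mu^{(0)}_{z_0}\equiv I$, formula \eqref{eq15} reduces to
\[
h^{(0)}_{z_0}(\lambda) = \int_D e^{\lambda(z-z_0)^2 - \bar\lambda(\bar z-\bar z_0)^2}\, v(z)\, d\Ree z\, d\Imm z.
\]
The phase equals $2i\,\Imm[\lambda(z-z_0)^2]$, which is purely imaginary and, as a function of $(\Ree z,\Imm z)$, has a unique non-degenerate critical point at $z=z_0\in D$ (Hessian determinant $-4|\lambda|^2$). After translating $z_0\mapsto 0$ and rotating so that $\lambda = s > 0$ is real, the phase becomes $4isxy$ with $z=x+iy$, and the linear change $u = x+y$, $w = x-y$ (Jacobian $1/2$) diagonalizes it to $e^{is(u^2-w^2)}$. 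The Fresnel integrals $\int_\R e^{\pm ist^2}\,dt = \sqrt{\pi/s}\, e^{\pm i\pi/4}$ then yield the model leading constant $\pi/(2s)$.

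To prove \eqref{estp} I would extend $v$ by zero to $\R^2$ (continuous and compactly supported, using $v|_{\partial D}=0$). Splitting $v = v(z_0) + (v-v(z_0))$, the first piece contributes exactly $\tfrac{\pi}{2s}v(z_0)$ by the Fresnel computation, and the second is $o(s^{-1})$ by a standard argument: cut off a shrinking disk around $z_0$ on which $|v-v(z_0)|$ is uniformly small by continuity, and integrate by parts once on the complement to extract a factor $s^{-1}$. For the quantitative estimate \eqref{estm}, the extra hypotheses $v\in C^2$ and $v|_{\partial D} = \partial_\nu v|_{\partial D} = 0$ ensure that the zero-extension $\tilde v$ belongs to $C^1(\R^2)$ with uniformly bounded second-order weak derivatives (both the normal and the tangential boundary derivatives of $v$ vanish, so $\nabla \tilde v\equiv 0$ on $\partial D$). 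Quantifying the preceding argument, I would split the $\R^2$-integral at the scale $r_0\sim|\lambda|^{-1/2}$, bound the inner part by $r_0^2\|v\|_{C^2}$, and integrate by parts twice on the outer part using $e^{4isxy} = (4isy)^{-1}\partial_x e^{4isxy}$ and its symmetric counterpart, applied to $\tilde v - v(z_0)$. The singular weights $1/x$, $1/y$ integrated over the outer region produce a factor $\sim\log r_0^{-1}\sim\log|\lambda|$, the $C^2$-norm enters through the derivatives generated by the integrations by parts, and the balance $r_0\sim|\lambda|^{-1/2}$ gives the rate $|\lambda|^{-1}\log(3|\lambda|)$ stated in \eqref{estm}.

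The main obstacle I anticipate is the precise bookkeeping in this integration-by-parts step: the logarithmic factor in \eqref{estm} is sharp and arises from $\int_{|y|>r_0}dy/|y|\sim\log r_0^{-1}$, so any looser treatment (for instance failing to exploit the cancellation between $v(z)$ and $v(z_0)$, or mishandling the boundary contributions produced when the cutoff circle meets the coordinate axes $x=0$, $y=0$) would degrade either the power of $|\lambda|$ or the power of $\log|\lambda|$. The vanishing of both $v$ and $\partial_\nu v$ on $\partial D$ is used essentially to make the zero-extension $C^1$ on $\R^2$ and to annihilate all boundary terms generated by the integrations by parts, thereby producing a clean estimate depending only on $\|v\|_{C^2(\bar D)}$.
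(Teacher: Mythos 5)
The paper does not prove this lemma: it is quoted from \cite{NS}, so the only internal ``proof'' is the citation. Your sketch follows the same stationary-phase strategy as that reference --- the Fresnel normalization $\int_{\R^2}e^{\lambda(z-z_0)^2-\bar\lambda(\bar z-\bar z_0)^2}\,d\Ree z\,d\Imm z=\tfrac{\pi}{2|\lambda|}$ correctly explains the constant $\tfrac{2}{\pi}$, and your plan for the qualitative limit \eqref{estp} (cutoff near $z_0$ by continuity, one integration by parts outside) is sound. For the quantitative estimate \eqref{estm}, however, the bookkeeping you propose does not close, for two concrete reasons. First, the inner-ball contribution: on $B_{z_0,r_0}$ you only control $|v(z)-v(z_0)|\le\|v\|_{C^1}|z-z_0|$, so taking absolute values gives $O(r_0^{3}\|v\|_{C^1})$, which with $r_0\sim|\lambda|^{-1/2}$ is $O(|\lambda|^{-3/2})$ and, after multiplication by $\tfrac{2}{\pi}|\lambda|$, only $O(|\lambda|^{-1/2})$ --- far weaker than $\log(3|\lambda|)/|\lambda|$. (Your stated bound ``$r_0^{2}\|v\|_{C^2}$'' is weaker still: it is of the size of the main term itself.) To reach the claimed rate one must also cancel the \emph{first-order} Taylor polynomial of $v$ at $z_0$, whose integral against $e^{\lambda(z-z_0)^2-\bar\lambda(\bar z-\bar z_0)^2}$ is negligible by parity, so that the inner ball only sees the genuine second-order remainder $O(\|v\|_{C^2}|z-z_0|^2)$, contributing $r_0^{4}=|\lambda|^{-2}$; equivalently one integrates by parts once and applies an estimate of the type of Lemma \ref{lem3} to $w=\partial_{\bar z}\bigl((v-v(z_0))/(\bar z-\bar z_0)\bigr)$. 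This extra order of cancellation is indispensable and absent from your plan.

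Second, the integration by parts you propose, with weight $(4isy)^{-1}\partial_x$ and its symmetric counterpart, uses weights that are singular on the entire coordinate axes through $z_0$, not just at $z_0$; the outer region $D\setminus B_{z_0,r_0}$ contains points with $|y|$ arbitrarily small, so $\int_{|y|>r_0}dy/|y|$ is not the integral you actually face, and iterating produces $1/y^{2}$ (not locally integrable across the axis) or $1/(xy)$ (divergent angular integral). The workable choice --- the one used in \cite{NS} and echoed in this paper's own proof of Lemma \ref{lem6} --- is the antiholomorphic vector field coming from $e^{\lambda(z-z_0)^2-\bar\lambda(\bar z-\bar z_0)^2}=-\tfrac{1}{2\bar\lambda(\bar z-\bar z_0)}\partial_{\bar z}\,e^{\lambda(z-z_0)^2-\bar\lambda(\bar z-\bar z_0)^2}$, whose weight is singular only at the single point $z_0$; two such steps yield the weight $|z-z_0|^{-2}$, and $\int_{r_0<|z-z_0|<L}|z-z_0|^{-2}\,d\Ree z\,d\Imm z=2\pi\log(L/r_0)$ is the true source of the logarithm. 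Finally, the hypotheses $v|_{\partial D}=0$ and $\partial_\nu v|_{\partial D}=0$ are needed not only to smooth the zero-extension but to kill the boundary terms on $\partial D$ generated by these integrations by parts and to keep the estimate uniform as $z_0\to\partial D$ (where $\mathrm{dist}(z_0,\partial D)^{-1}$ factors appear); your sketch does not address this uniformity.
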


Let
$$W_{z_0}(\lambda)=\int_D e^{\lambda(z-z_0)^2-\bar\lambda(\bar z-\bar z_0)^2}
w(z)d\Ree\,z d\Imm\,z,$$
where $z_0\in\bar D$, $\lambda\in\C$ and $w$ is some $\matn$-valued function on $\bar D$.
(One can see that $W_{z_0}=h_{z_0}^{(0)}$ for $w=v$.)

\begin{lem}[\cite{NS}] \label{lem3}
For $w\in C_{\bar z}^1(\bar D)$ the following estimate holds:
\begin{align} \label{estw1}
|W_{z_0}(\lambda)| &\le c_4(D) \frac{\log\,(3|\lambda|)}{ |\lambda|}  \|w\|_{C_{\bar z}^1(\bar D)},\ z_0\in\bar D,\ |\lambda|\ge 1. 
\end{align}
\end{lem}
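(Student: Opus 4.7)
The integral $W_{z_0}(\lambda)$ is an oscillatory integral in two variables with purely imaginary phase $2i\,\Imm[\lambda(z-z_0)^2]$, whose only (non-degenerate, saddle-type) critical point is $z=z_0$. The plan is to extract $1/|\lambda|$ decay through a $\bar\partial$-integration by parts based on the key identity
$$\partial_{\bar z}\bigl(e^{\lambda(z-z_0)^2-\bar\lambda(\bar z-\bar z_0)^2}\bigr) = -2\bar\lambda(\bar z-\bar z_0)\,e^{\lambda(z-z_0)^2-\bar\lambda(\bar z-\bar z_0)^2},$$
so that, writing $E$ for the exponential, $E = -(2\bar\lambda(\bar z-\bar z_0))^{-1}\partial_{\bar z}E$ off the saddle; this trades one power of $|\lambda|$ for a $|z-z_0|^{-1}$ singularity. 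The logarithm in the target estimate will then emerge from the angular distribution of the phase near the saddle.

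First I would isolate the saddle by fixing $\varepsilon>0$ and splitting $D=(D\cap B(z_0,\varepsilon))\cup D_\varepsilon$, with $D_\varepsilon = D\setminus B(z_0,\varepsilon)$; the ball contributes at most $\pi\varepsilon^2\|w\|_{C(\bar D)}$. On $D_\varepsilon$ I would apply Stokes's theorem to $(Fw)\,dz$ with $F=-E/(2\bar\lambda(\bar z-\bar z_0))$; a direct computation yields $\partial_{\bar z}F = E + E/[2\bar\lambda(\bar z-\bar z_0)^2]$, so the by-parts formula produces a boundary integral on $\partial D$ (controlled by $\|w\|_{C(\bar D)}/|\lambda|$ via the $C^2$ regularity of $\partial D$), a circle integral on $\{|z-z_0|=\varepsilon\}$ (also $O(\|w\|_{C(\bar D)}/|\lambda|)$ since the factor $\varepsilon$ on the curve cancels the $\varepsilon^{-1}$ from $F$), an area integral of $F\,\partial_{\bar z}w$ that is $O(\|\partial_{\bar z}w\|_{C(\bar D)}/|\lambda|)$ because $|z-z_0|^{-1}$ is locally integrable on $\R^2$, and a residual term $R = \frac{1}{2\bar\lambda}\int_{D_\varepsilon}E\,w/(\bar z-\bar z_0)^2\,dA$.

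The main obstacle is $R$, since $|\bar z-\bar z_0|^{-2}$ is not locally integrable. I would split $w = w(z_0) + (w - w(z_0))$. For the difference, the Cauchy--Pompeiu representation expresses $w(z) - w(z_0)$ by a $\partial D$-integral and a $\partial_{\bar\zeta}w$-integral with kernels of the form $(z-z_0)/[(\zeta-z)(\zeta-z_0)]$, so the extra factor $|z-z_0|$ tames one power of $|\bar z-\bar z_0|^{-1}$ and the resulting (Fubini-switched) double integrals are convergent and of size $O(\|w\|_{C^1_{\bar z}(\bar D)}/|\lambda|)$. For the constant piece $\frac{w(z_0)}{2\bar\lambda}\int_{D_\varepsilon}E/(\bar z-\bar z_0)^2\,dA$, I would integrate by parts a second time using $(\bar z-\bar z_0)^{-2}=-\partial_{\bar z}[(\bar z-\bar z_0)^{-1}]$; the interior term telescopes via $\partial_{\bar z}E/(\bar z-\bar z_0) = -2\bar\lambda E$ back to $-w(z_0)\int_{D_\varepsilon}E\,dA$, up to two further (harmless) Stokes boundary integrals.

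Everything is thus reduced to the model estimate $|\int_D E\,dA|\le c\,\log(3|\lambda|)/|\lambda|$. In polar coordinates $z-z_0=\rho e^{i\phi}$ with $\lambda=|\lambda|e^{i\theta}$, the phase becomes $2i|\lambda|\rho^2\sin(\theta+2\phi)$; after the change of variable $u=\rho^2$ the inner integral is bounded by $\min\bigl\{R(\phi)^2,\,|\lambda|^{-1}|\sin(\theta+2\phi)|^{-1}\bigr\}$, and the four angular critical points $\sin(\theta+2\phi)=0$ force a split of the $\phi$-integration according to whether $|\sin(\theta+2\phi)|$ is smaller or larger than a threshold $\sim|\lambda|^{-1/2}$. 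This is exactly where the logarithm enters, as the bound $\int d\phi/|\sin(\theta+2\phi)|$ on the large-sine region gives $O(\log|\lambda|)$. Finally, choosing $\varepsilon\sim|\lambda|^{-1/2}$ to balance the ball error with the $1/|\lambda|$ errors from the by-parts steps yields the claimed bound. The main technical difficulty throughout is the non-integrable singularity in $R$, which forces the second integration by parts and the explicit polar-coordinate estimate.
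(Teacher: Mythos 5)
First, note that the paper does not prove this lemma at all: it is quoted from \cite{NS}, and the argument there is the short one that your proposal reproduces in its first half --- split $D$ into $D\cap B_{z_0,\varepsilon}$ and $D_\varepsilon=D\setminus B_{z_0,\varepsilon}$, bound the ball by $\pi\varepsilon^2\|w\|_{C(\bar D)}$, and on $D_\varepsilon$ integrate by parts once via $E=-\bigl(2\bar\lambda(\bar z-\bar z_0)\bigr)^{-1}\partial_{\bar z}E$. Up to and including the identification of the residual term $R=\frac{1}{2\bar\lambda}\int_{D_\varepsilon}E\,w\,(\bar z-\bar z_0)^{-2}\,dA$ your computation is correct. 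The divergence comes at the sentence ``the main obstacle is $R$, since $|\bar z-\bar z_0|^{-2}$ is not locally integrable'': this obstacle is illusory, because $R$ is an integral over $D_\varepsilon$, where the singularity has been excised, and $\int_{D_\varepsilon}|z-z_0|^{-2}\,dA\le 2\pi\log\bigl(\mathrm{diam}(D)/\varepsilon\bigr)$. With your own choice $\varepsilon\sim|\lambda|^{-1/2}$ this gives $|R|\le c(D)\,\|w\|_{C(\bar D)}\log(3|\lambda|)/|\lambda|$ in one line --- and this, not the angular distribution of the phase, is where the logarithm in \eqref{estw1} comes from. The entire second half of your argument (the splitting $w=w(z_0)+(w-w(z_0))$, the Cauchy--Pompeiu step, the second integration by parts, and the polar-coordinate stationary-phase estimate of $\int_D E\,dA$) is therefore unnecessary.

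That detour, while conceptually viable, also has loose ends you would need to repair if you insisted on it. The bound $\min\{R(\phi)^2,|\lambda|^{-1}|\sin(\theta+2\phi)|^{-1}\}$ for the inner $u$-integral tacitly assumes the radial slice $\{\rho: z_0+\rho e^{i\phi}\in D\}$ is a single interval, i.e.\ that $D$ is star-shaped about $z_0$; for a general $C^2$ domain you must either control the number of components uniformly or introduce a cutoff. Likewise, your claim that the $\partial D$ boundary term is $O(\|w\|_{C(\bar D)}/|\lambda|)$ fails uniformly in $z_0$: the lemma allows $z_0\in\bar D$, and $\int_{\partial D\setminus B_{z_0,\varepsilon}}|z-z_0|^{-1}|dz|$ is then only $O(\log(1/\varepsilon))$ --- harmless for the final estimate, but not what you wrote. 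I would keep your first half verbatim, replace everything after the definition of $R$ by the logarithmic area bound above, and record that the $F\,\partial_{\bar z}w$ term is the sole reason the right-hand side of \eqref{estw1} carries the $C^1_{\bar z}$ norm rather than the sup norm.
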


\begin{lem}[\cite{NS2}] \label{lem4}
For $v \in C^1_{\bar z}(\bar D)$ and for $\|g_{z_0,\lambda} v \|^{op}_{C^1_{\bar z}(\bar D)} \leq \delta < 1$ we have that
\begin{align} \label{estmuk}
&\|\mu_{z_0}(\cdot, \lambda) - \mu_{z_0}^{(k)}(\cdot, \lambda)\|_{C^1_{\bar z}(\bar D)} \leq \frac{\delta^{k+1}}{1-\delta}, \\ \label{esth}
&|h_{z_0}(\lambda)-h^{(k)}_{z_0}(\lambda)| \leq c_5(D,n)\frac{\log(3|\lambda|)}{|\lambda|} \frac{\delta^{k+1}}{1-\delta} \|v\|_{C^1_{\bar z}(\bar D)},
\end{align}
where $z_0 \in D , \; \lambda \in \C,\; |\lambda |\ge 1, \; k \in \N \cup \{ 0 \}$.
\end{lem}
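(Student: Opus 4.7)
The plan is to deduce both estimates from the Neumann series representation of $\mu_{z_0}$, with the second also invoking the oscillatory integral bound of Lemma \ref{lem3}. First I would note that the hypothesis $\|g_{z_0,\lambda} v\|^{op}_{C^1_{\bar z}(\bar D)} \leq \delta < 1$ makes the integral equation \eqref{eq14} solvable by geometric series, so that
\[
\mu_{z_0}(\cdot,\lambda) = \sum_{j=0}^\infty (g_{z_0,\lambda} v)^j I
\]
with convergence in $C^1_{\bar z}(\bar D)$. Since the constant matrix $I$ satisfies $\|I\|_{C^1_{\bar z}(\bar D)} = 1$, the identity $\mu_{z_0}-\mu_{z_0}^{(k)} = \sum_{j=k+1}^\infty (g_{z_0,\lambda} v)^j I$ combined with submultiplicativity of the operator norm yields \eqref{estmuk} directly by summing the geometric tail $\sum_{j\ge k+1}\delta^j = \delta^{k+1}/(1-\delta)$.

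For \eqref{esth}, I would subtract the definitions of $h_{z_0}$ and $h_{z_0}^{(k)}$ to obtain
\[
h_{z_0}(\lambda) - h^{(k)}_{z_0}(\lambda) = \int_D e^{\lambda(z-z_0)^2 - \bar\lambda(\bar z - \bar z_0)^2}\, v(z)\bigl(\mu_{z_0}(z,\lambda)-\mu_{z_0}^{(k)}(z,\lambda)\bigr)\,d\Ree z\, d\Imm z,
\]
which is precisely $W_{z_0}(\lambda)$ in the sense of Lemma \ref{lem3}, taken with $w = v\bigl(\mu_{z_0}-\mu_{z_0}^{(k)}\bigr)$. Applying Lemma \ref{lem3} produces the factor $c_4(D)\log(3|\lambda|)/|\lambda|$ multiplied by $\|w\|_{C^1_{\bar z}(\bar D)}$, and I would finish by estimating this product norm by a constant $c(n)\|v\|_{C^1_{\bar z}(\bar D)}\|\mu_{z_0}-\mu_{z_0}^{(k)}\|_{C^1_{\bar z}(\bar D)}$, using submultiplicativity of $\|\cdot\|_{C^1_{\bar z}(\bar D)}$ under matrix multiplication (in the spirit of \eqref{estsolmu}), and inserting the bound \eqref{estmuk} already proven.

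I do not expect any serious obstacle: the argument is a routine Neumann-series tail estimate combined with one application of Lemma \ref{lem3}. One should also verify that $w = v\bigl(\mu_{z_0}-\mu_{z_0}^{(k)}\bigr)$ indeed belongs to $C^1_{\bar z}(\bar D)$, which is immediate since $v \in C^1_{\bar z}(\bar D)$ and the tail $\mu_{z_0}-\mu_{z_0}^{(k)}$ lies in $C^1_{\bar z}(\bar D)$ by construction. The only mildly delicate point is tracking the dimension-dependent constant arising when bounding the $C^1_{\bar z}$ norm of a matrix-valued product, since the convention $|u|=\max_{i,j}|u_{i,j}|$ introduces a factor proportional to $n$ at each matrix multiplication; this is precisely what forces $c_5$ in \eqref{esth} to depend on $n$ as well as on $D$.
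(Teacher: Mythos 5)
Your proof is correct and is essentially the standard argument: the paper itself does not reprove Lemma \ref{lem4} but defers to \cite{NS2}, where the proof is exactly this Neumann-series tail estimate (using $\|I\|_{C^1_{\bar z}(\bar D)}=1$) followed by an application of the stationary-phase bound of Lemma \ref{lem3} to $w=v(\mu_{z_0}-\mu_{z_0}^{(k)})$, with the submultiplicativity constant accounting for the $n$-dependence of $c_5$. No gaps.
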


The proofs of Lemmata \ref{lem1}-\ref{lem4} can be found in the references given.

We will also need the following two new lemmata.
\begin{lem} \label{lem5}
Let $g_{z_0, \lambda} u$ be defined by \eqref{green}, where $u \in C^1_{\bar z}(\bar D)$, $z_0, \lambda \in \C$. Then the following estimate holds:
\begin{align} \label{estcontg}
\| &g_{z_0, \lambda} u \|_{C(\bar D)} \leq c_6(D) \frac{\log(3|\lambda|)}{|\lambda|} \|u \|_{C^1_{\bar z}(\bar D)}, \; |\lambda| \geq 1.
\end{align}
\end{lem}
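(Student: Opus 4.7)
The plan is to adapt the stationary-phase integration-by-parts argument used for Lemma \ref{lem3}. Using \eqref{eq12} and Fubini, rewrite
\begin{equation*}
g_{z_0,\lambda}u(z) = \frac{1}{4\pi^2}\int_D\int_D \frac{e^{p(\zeta)-p(\eta)}}{(z-\eta)(\bar\eta-\bar\zeta)}\,u(\zeta)\,d\Ree\eta\,d\Imm\eta\,d\Ree\zeta\,d\Imm\zeta,
\end{equation*}
where $p(\zeta):=\lambda(\zeta-z_0)^2-\bar\lambda(\bar\zeta-\bar z_0)^2$ is purely imaginary so that $|e^{p(\zeta)-p(\eta)}|=1$. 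Each variable carries its own oscillation, and integration by parts in either one will yield the desired $1/|\lambda|$ factor. A direct application of Lemma \ref{lem3} with $w(\zeta)=u(\zeta)\int_D e^{-p(\eta)}/[(z-\eta)(\bar\eta-\bar\zeta)]\,d\eta$ fails because this $w$ is not in $C^1_{\bar z}(\bar D)$ uniformly in $\lambda$ (logarithmic blow-up at $\zeta=z$ and worse behavior for the derivative), so one must perform the IBP directly on the double integral.

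I would integrate by parts in $\zeta$, using $e^{p(\zeta)}=-[2\bar\lambda(\bar\zeta-\bar z_0)]^{-1}\partial_{\bar\zeta}e^{p(\zeta)}$, valid for $\zeta\ne z_0$. Excising a small disk $B_\varepsilon(z_0)$ and, for each fixed $\eta$, a small disk $B_\varepsilon(\eta)$ removing the diagonal singularity $\zeta=\eta$, the derivative $\partial_{\bar\zeta}$ falls on the remaining factor $u(\zeta)/[(\bar\zeta-\bar z_0)(\bar\eta-\bar\zeta)(z-\eta)]$. This produces three types of contributions after the IBP: (i) a well-behaved term containing $\partial_{\bar\zeta}u$, bounded by $c\|u\|_{C^1_{\bar z}}/|\lambda|$ via absolutely convergent integration against the product of Cauchy kernels in $\R^2$; (ii) a term with the non-locally-integrable singularity $(\bar\zeta-\bar z_0)^{-2}$ arising from $\partial_{\bar\zeta}[1/(\bar\zeta-\bar z_0)]$; (iii) an analogous term with $(\bar\eta-\bar\zeta)^{-2}$. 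On the complement of the excised disks, the singular contributions (ii)--(iii) are bounded by $c\log(R/\varepsilon)/|\lambda|\cdot\|u\|_{C^1_{\bar z}}$, where $R=\mathrm{diam}(D)$; the excised regions contribute trivially $O(\varepsilon^2)\|u\|_{C^1_{\bar z}}$ since the Cauchy kernels are locally integrable in $\R^2$. Boundary contributions on $\partial D$ are estimated using $|e^{p}|=1$ together with the uniform $L^1$-control of the inner $\eta$-integration.

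Balancing $\varepsilon^2\sim \log(R/\varepsilon)/|\lambda|$, i.e.\ choosing $\varepsilon\sim |\lambda|^{-1/2}$, converts the $\log(R/\varepsilon)/|\lambda|$ contribution into the announced $\log(3|\lambda|)/|\lambda|$ bound. The main technical obstacle is the accurate handling of the second-order singularities $(\bar\zeta-\bar z_0)^{-2}$ and $(\bar\eta-\bar\zeta)^{-2}$ produced by one IBP: a further IBP would create third-order singularities that the oscillation cannot compensate, so one is forced to absorb these via the cutoff/optimization procedure, which is precisely the source of the $\log(3|\lambda|)$ loss in the final estimate.
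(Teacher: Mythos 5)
Your overall instinct (stationary phase in the $\zeta$-variable, excision of the singular sets, optimization in $\varepsilon$) is reasonable, but the argument as written has two concrete quantitative gaps, and it also diverges from the paper's much shorter route: the paper factors $g_{z_0,\lambda}=\frac14 T_{z_0,\lambda}\bar T_{z_0,\lambda}$ and applies the already-proved one-operator estimate \eqref{estbar} twice, using the identity $\frac{\partial}{\partial z}\bar T_{z_0,\lambda}u=e^{\lambda(z-z_0)^2-\bar\lambda(\bar z-\bar z_0)^2}u$ (so $\|\frac{\partial}{\partial z}\bar T_{z_0,\lambda}u\|_{C(\bar D)}=\|u\|_{C(\bar D)}$), which keeps the oscillation alive in \emph{both} integration variables.

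First gap: the excised diagonal region $\{|\zeta-\eta|<\varepsilon\}$ does not contribute $O(\varepsilon^2)$. For fixed $\eta$ one has $\int_{|\zeta-\eta|<\varepsilon}|\bar\eta-\bar\zeta|^{-1}\,d\Ree\zeta\,d\Imm\zeta=2\pi\varepsilon$, so after integrating $|z-\eta|^{-1}$ over $\eta$ this piece is $O(\varepsilon)\|u\|_{C(\bar D)}$. With your choice $\varepsilon\sim|\lambda|^{-1/2}$ this alone is $O(|\lambda|^{-1/2})$, which misses the target $\log(3|\lambda|)/|\lambda|$ by a polynomial factor; repairing it forces $\varepsilon\lesssim|\lambda|^{-1}$ and a re-examination of every other term. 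Second, and more structural, gap: you estimate the inner $\eta$-integral ``via absolutely convergent integration against the product of Cauchy kernels,'' i.e.\ you throw away the oscillation $e^{-p(\eta)}$. But $\int_D\frac{d\Ree\eta\,d\Imm\eta}{|z-\eta||\eta-\zeta|}\le c\bigl(1+\log\frac{R}{|z-\zeta|}\bigr)$ is not uniformly bounded, and in your term (ii) this logarithm sits against the weight $|\zeta-z_0|^{-2}$ on $|\zeta-z_0|\ge\varepsilon$; at the point $z=z_0$ (which the $C(\bar D)$-norm does not allow you to avoid) one gets $\int_{\varepsilon\le|\zeta-z_0|\le R}\frac{\log(R/|\zeta-z_0|)}{|\zeta-z_0|^2}\,d\Ree\zeta\,d\Imm\zeta\sim(\log(R/\varepsilon))^2$, hence at best $(\log(3|\lambda|))^2/|\lambda|$ rather than the stated single logarithm. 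The same $z\to z_0$ degeneration appears in your term (iii) and in the boundary terms of the integration by parts. To recover the single log you must exploit the $\eta$-oscillation as well, which is exactly what the factorization through $T_{z_0,\lambda}$ and $\bar T_{z_0,\lambda}$ accomplishes; without it, your scheme proves a weaker estimate than \eqref{estcontg}.
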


\begin{lem} \label{lem6}
The expression
\begin{equation} \label{defdoppiavu}
W(u,v)(\lambda) = \int_D e^{\lambda (z-z_0)^2 -\bar \lambda (\bar z- \bar z_0)^2}u(z) (g_{z_0,\lambda}v)(z) d\Ree z \, d \Imm z,
\end{equation}
defined for $u,v \in C^1_{\bar z}(\bar D)$ with $\|u\|_{C^1_{\bar z}(\bar D)},\|v\|_{C^1_{\bar z}(\bar D)} \leq N_1$, $\lambda \in \C$, $z_0 \in D$, satisfies the estimate
\begin{align} \label{estdoppiavu}
|W(u,v)(\lambda)|\leq c_7(D,N_1,n)\frac{\left( \log(3|\lambda|) \right)^2}{|\lambda|^{1+3/4}}, \qquad |\lambda | \geq 1.
\end{align} 
\end{lem}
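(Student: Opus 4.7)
Let $U(z) := (g_{z_0,\lambda}v)(z)$ and $w(z) := u(z) U(z)$, so that $W(u,v)(\lambda) = \int_D e^{\phi(z)} w(z)\, d\Ree z\, d\Imm z$ with $\phi(z) := \lambda(z-z_0)^2 - \bar\lambda(\bar z-\bar z_0)^2$. A direct application of Lemma \ref{lem3} would yield only $|W(u,v)(\lambda)| \le c \log(3|\lambda|)/|\lambda|^{3/2}$, since $\|w\|_{C^1_{\bar z}(\bar D)} \le c|\lambda|^{-1/2}$ by Lemma \ref{lem1} applied to $U$. This is weaker than \eqref{estdoppiavu}, so the sharper $|\lambda|^{-7/4}$ decay must exploit the stronger sup-norm bound $\|U\|_{C(\bar D)} \leq c_6(D)\log(3|\lambda|)/|\lambda| \cdot N_1$ provided by Lemma \ref{lem5}.

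My plan is to split $W(u,v)(\lambda) = W_{\mathrm{near}} + W_{\mathrm{far}}$ at a radius $\rho = \rho(|\lambda|)$ around $z_0$, with $W_{\mathrm{near}}$ the integral over $D \cap B_\rho(z_0)$. On the near ball, Lemma \ref{lem5} gives $|W_{\mathrm{near}}| \le \pi \rho^2 \|w\|_{C(\bar D)} \le c N_1^2 \rho^2 \log(3|\lambda|)/|\lambda|$. On the complement, I integrate by parts in $\bar z$ using $e^{\phi(z)} = -\partial_{\bar z} e^{\phi(z)}/(2\bar\lambda(\bar z-\bar z_0))$; the boundary contributions on $\partial D$ and on the inner circle $|z-z_0| = \rho$ are bounded by $O(\log(3|\lambda|)/|\lambda|^2)$ via Lemma \ref{lem5}. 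The product rule applied to $w/(\bar z-\bar z_0)$ produces three interior terms. Two of them (the pieces involving $(\partial_{\bar z}u)U$ and $-uU/(\bar z-\bar z_0)$, each divided by $\bar z-\bar z_0$) are controlled using Lemma \ref{lem5} together with $\int_{D \setminus B_\rho(z_0)}|\bar z-\bar z_0|^{-2}\, d\Ree z\, d\Imm z \leq C\log(R/\rho)$, jointly contributing $O((\log(3|\lambda|))^2/|\lambda|^2)$. The remaining dominant piece is $\mathcal I := (2\bar\lambda)^{-1}\int_{D\setminus B_\rho(z_0)} e^{\phi(z)} u(z) \partial_{\bar z} U(z)/(\bar z-\bar z_0)\, d\Ree z\, d\Imm z$.

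To improve the estimate of $\mathcal I$ beyond the naive $|\lambda|^{-3/2}$, I would use the explicit identity
\begin{equation*}
\partial_{\bar z}(g_{z_0,\lambda}v)(z) = \frac{e^{-\phi(z)}}{4\pi}\int_D \frac{e^{\phi(\zeta)} v(\zeta)}{\bar z - \bar\zeta}\, d\Ree\zeta\, d\Imm\zeta,
\end{equation*}
obtained by differentiating \eqref{eq12} in $\bar z$ and collapsing the $\eta$-integral via $\partial_{\bar z}(z-\eta)^{-1} = \pi\delta(z-\eta)$. The key observation is that $e^{\phi(z)} \partial_{\bar z} U(z)$ simplifies to $(4\pi)^{-1}\int e^{\phi(\zeta)} v(\zeta)/(\bar z-\bar\zeta)\, d\zeta$ -- the oscillation in $z$ cancels. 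After exchanging the order of integration, $\mathcal I$ becomes the oscillatory integral $(8\pi\bar\lambda)^{-1}\int_D e^{\phi(\zeta)} v(\zeta) G(\zeta)\, d\Ree\zeta\, d\Imm\zeta$ with $G(\zeta) := \int_{D \setminus B_\rho(z_0)} u(z)/[(\bar z-\bar z_0)(\bar z-\bar\zeta)]\, d\Ree z\, d\Imm z$. One then applies Lemma \ref{lem3} to this $\zeta$-integral and controls $\|vG\|_{C^1_{\bar\zeta}(\bar D)}$ in terms of $\rho$ via a singular-integral analysis of $G$. Optimizing $\rho$ (roughly $\rho \sim |\lambda|^{-3/8}$, so that $\rho^2/|\lambda| \sim |\lambda|^{-7/4}$) and tracking logarithmic factors yields the claimed bound.

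I expect the main obstacle to be obtaining a sharp enough bound on $\|\partial_{\bar\zeta}G\|_{C(\bar D)}$: the naive pointwise estimate gives only $O(1/\rho)$, which after optimization leads to a weaker $|\lambda|^{-5/3}$ decay. Reaching the sharp $|\lambda|^{-7/4}$ exponent requires a more delicate treatment of the Cauchy-type kernel $(\bar z-\bar z_0)^{-1}(\bar z-\bar\zeta)^{-1}$, most likely via a further integration by parts in $\bar z$ against the $(\bar z-\bar\zeta)^{-2}$ factor appearing in $\partial_{\bar\zeta}G$ (producing an extra $\partial_{\bar z}u$ contribution together with a $(\bar z-\bar z_0)^{-2}$ term to be controlled with the $\log(R/\rho)$ estimate), or by an additional splitting around $\zeta = z_0$ inside the $\zeta$-integral. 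The $(\log(3|\lambda|))^2$ factor in \eqref{estdoppiavu} reflects the two applications of Lemma \ref{lem3} at the two levels of the iteration.
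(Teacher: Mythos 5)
Your setup coincides with the paper's: the same split at a ball around $z_0$, the sup-norm bound of Lemma \ref{lem5} on the ball, integration by parts in $\bar z$ off the ball, and the identification of the dominant piece as $\frac{1}{|\lambda|}\int u(z)\,\bar T_{z_0,\lambda}v(z)/(\bar z-\bar z_0)$ after the exponentials cancel (your displayed identity for $\partial_{\bar z}(g_{z_0,\lambda}v)$ is exactly $\frac14 e^{-\phi}\bar T_{z_0,\lambda}v$ with $\bar T_{z_0,\lambda}$ as in \eqref{deftbar}). Up to this point the argument is sound. But the lemma is not proved, because the one step that actually produces the exponent $3/4$ is the estimate of this dominant piece, and there your proposal stops short: by your own accounting the Fubini-plus-Lemma-\ref{lem3} route gives only $|\lambda|^{-5/3}$, and the ``further integration by parts'' or ``additional splitting in $\zeta$'' that would upgrade it is only gestured at. Worse, the quantity you would need to control, $\|vG\|_{C^1_{\bar\zeta}(\bar D)}$, involves $\partial_{\bar\zeta}G(\zeta)=\int u(z)\,(\bar z-\bar z_0)^{-1}(\bar z-\bar\zeta)^{-2}\,d\Ree z\,d\Imm z$, a Beurling-type singular integral that is not bounded in sup norm by $\|u\|_{C(\bar D)}$ alone; so even the $O(1/\rho)$ bound you quote is not justified as stated.

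The ingredient the paper uses instead, and which you are missing, is a \emph{pointwise} stationary-phase estimate for $\bar T_{z_0,\lambda}v$ itself, namely \eqref{estbar2}: $|\bar T_{z_0,\lambda}v(z)|\le \tau_1(D)\log(3|\lambda|)(1+|z-z_0|)\,|\lambda|^{-1}|z-z_0|^{-2}\|v\|_{C^1_{\bar z}(\bar D)}$, quoted from the proof of \cite[Lemma 3.1]{NS}. With this in hand no second oscillatory integral in $\zeta$ is needed: one splits $\int_D |u|\,|\bar T_{z_0,\lambda}v|\,|z-z_0|^{-1}$ at a radius $\delta$, uses the uniform bound \eqref{estbar} ($\lesssim|\lambda|^{-1/2}$) on $B_{z_0,\delta}$ where $\int|z-z_0|^{-1}\lesssim\delta$, and uses \eqref{estbar2} outside where $\int_{|z-z_0|>\delta}|z-z_0|^{-3}\lesssim\delta^{-1}$; balancing $\delta|\lambda|^{-1/2}$ against $\log(3|\lambda|)|\lambda|^{-1}\delta^{-1}$ gives $\delta=\tfrac12|\lambda|^{-1/4}$ and the bound $\log(3|\lambda|)|\lambda|^{-3/4}$, hence $|\lambda|^{-7/4}$ after the prefactor. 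Note this inner radius $\delta$ is decoupled from the outer radius (the paper takes $\varepsilon=|\lambda|^{-1/2}$ for the outer split, not your $|\lambda|^{-3/8}$, since the dominant term is estimated independently of $\varepsilon$). Without \eqref{estbar2} or an equivalent pointwise decay statement, your argument does not reach the claimed exponent.
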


The proofs of Lemmata \ref{lem5}, \ref{lem6} are given in section \ref{seclem}.

\section{Proof of Theorem \ref{theo1}}
We begin with a technical lemma, which will be useful to generalise Alessandrini's identity.
\begin{lem} \label{lemtech}
Let $v \in C^1(\bar D, \matn)$ be a matrix-valued potential which satisfies condition \eqref{direig} (i.e. $0$ is not a Dirichlet eigeinvalue for the operator $- \Delta +v$ in $D$). Then ${}^t v$, the transpose of $v$, also satisfies condition \eqref{direig}.
\end{lem}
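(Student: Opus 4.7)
The plan is to reformulate condition \eqref{direig} as injectivity of the Dirichlet realization of $-\Delta+v$ acting on $\C^n$-valued functions (which is equivalent to the matrix-valued formulation, column-by-column), and then exploit the Fredholm alternative with respect to the bilinear (non-conjugate) $L^2$-pairing, under which the transpose of $-\Delta+v$ is exactly $-\Delta+{}^t v$.

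Concretely, I would introduce the operator $L_v : H^2(D,\C^n)\cap H^1_0(D,\C^n) \to L^2(D,\C^n)$, $L_v u = -\Delta u + v u$. Since the Dirichlet Laplacian is an isomorphism between these spaces and multiplication by $v \in C^1(\bar D,\matn)$ is compact from $H^2\cap H^1_0$ to $L^2$ (by Rellich--Kondrachov), $L_v$ is a compact perturbation of an isomorphism and is therefore Fredholm of index zero. Condition \eqref{direig} is then the statement $\ker L_v = \{0\}$, and the same reduction applies with $v$ replaced by ${}^t v$.

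Next, using the symmetric bilinear pairing $(u,\phi)\mapsto \int_D u\cdot\phi\,dz = \int_D \sum_{i} u_i\phi_i\,dz$, a double integration by parts, in which both boundary contributions vanish because $u$ and $\phi$ have zero Dirichlet data, yields
\[
\int_D (L_v u)\cdot \phi\,dz = \int_D u \cdot (L_{{}^t v}\phi)\,dz,
\]
since $(vu)\cdot\phi = \sum_{i,j} v_{ij}u_j\phi_i = \sum_j u_j ({}^t v\,\phi)_j = u\cdot({}^t v\,\phi)$. Thus the transpose of $L_v$ relative to this pairing is $L_{{}^t v}$, and the Fredholm alternative gives $\mathrm{codim}\,\mathrm{Range}(L_v) = \dim \ker L_{{}^t v}$. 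Combining this with index zero, $\dim\ker L_v = \dim\ker L_{{}^t v}$, so $\ker L_v = \{0\}$ forces $\ker L_{{}^t v} = \{0\}$.

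I do not expect any serious obstacle: the two ingredients are the compactness of the zero-order perturbation and the bilinear integration-by-parts identity, both routine. An alternative route, if preferred, goes through the Hilbert-space adjoint: $L_v^{\ast} = L_{v^{\ast}}$ with $v^{\ast} = \overline{{}^t v}$, giving $\dim\ker L_v = \dim\ker L_{v^{\ast}}$, after which complex conjugation $u\leftrightarrow\bar u$ identifies $\ker L_{v^{\ast}}$ with $\ker L_{{}^t v}$ and yields the same conclusion.
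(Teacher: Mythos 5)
Your argument is correct, and it rests on the same two ingredients as the paper's proof: the bilinear (non-conjugate) Green's identity, which identifies the transpose of $-\Delta+v$ with $-\Delta+{}^t v$, and the Fredholm alternative for the Dirichlet problem. The difference is one of packaging. The paper argues concretely: it takes $u\in H^1$ with $(-\Delta+{}^t v)u=0$ and $u|_{\partial D}=0$, first shows $\frac{\partial u}{\partial \nu}|_{\partial D}=0$ by pairing $u$ against solutions of the Dirichlet problem for $-\Delta+v$, then deduces $\int_D {}^t u\,(-\Delta+v)g\,d\Ree z\,d\Imm z=0$ for all $g$, and invokes the Fredholm alternative only to get surjectivity of $-\Delta+v$ on $H^1_0$, which forces $u\equiv 0$. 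You instead set up $L_v$ as a Fredholm operator of index zero on $H^2\cap H^1_0$ and read off $\dim\ker L_v=\dim\ker L_{{}^t v}$ from the transpose identification; this is cleaner, avoids the normal-derivative step entirely, and gives the two-sided equivalence for free. Two points should be made explicit in a full write-up: (i) the elliptic-regularity step identifying condition \eqref{direig}, stated for $H^1$ solutions, with $\ker L_v=\{0\}$ in $H^2\cap H^1_0$ (this uses the $C^2$ boundary); and (ii) the justification that the annihilator of the range of $L_v$ under the bilinear pairing is exactly $\ker L_{{}^t v}$ --- your alternative route through the Hilbert adjoint $L_{v^{\ast}}$ followed by complex conjugation is the cleanest way to supply this. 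The column-by-column reduction from $\matn$-valued to $\C^n$-valued solutions is also fine.
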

The proof of Lemma \ref{lemtech} is given in section \ref{seclem}.

We can now state and prove a matrix-valued version of Alessandrini's identity (see \cite{A} for the scalar case).
\begin{lem}
Let $v_1,v_2 \in C^1(\bar D, \matn)$ be two matrix-valued potentials which satisfy \eqref{direig}, $\Phi_1, \Phi_2$ their associated Dirichlet-to-Neumann operators, respectively, and $u_1,u_2 \in C^2(\bar D, \matn)$ matrix-valued functions such that 
\begin{align} \nonumber
(-\Delta + v_1) u_1 =0, \quad (-\Delta + {}^t v_2) u_2 =0 \quad \text{on } D,
\end{align}
where ${}^t A$ stand for the transpose of $A$. Then we have the identity
\begin{equation} \label{aless3}
\int_{\partial D} {}^t u_2(z) (\Phi_2 - \Phi_1)u_1(z) |dz| = \int_D {}^t u_2(z) (v_2(z) -v_1(z)) u_1(z) d \Ree z \, d \Imm z.
\end{equation}
\end{lem}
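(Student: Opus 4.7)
The plan is to generalize the scalar Alessandrini identity by applying Green's second formula twice, exploiting the transpose in the hypothesis on $u_2$ to absorb the non-commutativity of matrix multiplication. First I would set $f_1 := u_1|_{\partial D}$ and, using Lemma \ref{lemtech} together with assumption \eqref{direig} for $v_2$, introduce the auxiliary solution $\tilde\psi_2$ of $(-\Delta + v_2)\tilde\psi_2 = 0$ with $\tilde\psi_2|_{\partial D} = f_1$, which is sufficiently regular to admit Green's identity. By definition of the Dirichlet-to-Neumann map one then has $\Phi_2(f_1) = \partial_\nu \tilde\psi_2|_{\partial D}$, while $\Phi_1(f_1) = \partial_\nu u_1|_{\partial D}$ by uniqueness.

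Next I would record the matrix-valued version of Green's second identity, which holds entry by entry: for $A, B \in C^2(\bar D, \matn)$,
$$\int_D \bigl( A\, \Delta B - (\Delta A)\, B \bigr)\, d\Ree z\, d\Imm z = \int_{\partial D} \bigl( A\, \partial_\nu B - (\partial_\nu A)\, B \bigr)\, |dz|,$$
where all products are matrix products in the specified order. The key algebraic observation is that, since $(-\Delta + {}^t v_2) u_2 = 0$, transposition yields $\Delta({}^t u_2) = {}^t({}^t v_2 \cdot u_2) = {}^t u_2 \cdot v_2$, so left-multiplication of $u_2$ by ${}^t v_2$ becomes right-multiplication of ${}^t u_2$ by $v_2$. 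This is exactly what is needed to produce a symmetric matrix pairing.

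I would then apply Green's identity twice, both times with $A = {}^t u_2$. With $B = \tilde\psi_2$, the bulk integrand is $({}^t u_2)(v_2 \tilde\psi_2) - ({}^t u_2 \cdot v_2)\tilde\psi_2 = 0$ by associativity, so, using $\tilde\psi_2|_{\partial D} = u_1|_{\partial D}$,
$$\int_{\partial D} {}^t u_2 \cdot \Phi_2(f_1)\, |dz| = \int_{\partial D} \partial_\nu({}^t u_2) \cdot u_1\, |dz|.$$
With $B = u_1$, the bulk integrand is ${}^t u_2 (v_1 - v_2) u_1$, which does not cancel, and I obtain
$$\int_D {}^t u_2 (v_1 - v_2) u_1\, d\Ree z \, d\Imm z = \int_{\partial D} {}^t u_2 \cdot \Phi_1(f_1)\, |dz| - \int_{\partial D} \partial_\nu({}^t u_2) \cdot u_1\, |dz|.$$

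Subtracting the second relation from the first eliminates the $\partial_\nu({}^t u_2)$ boundary term and yields \eqref{aless3}. There is no serious obstacle; the only point requiring care is tracking the order of matrix factors under transposition, and the hypothesis that $u_2$ satisfies the equation with ${}^t v_2$ rather than $v_2$ is chosen precisely to make the bulk cancellation in the first application of Green's identity go through, which also explains why Lemma \ref{lemtech} was singled out as a preliminary step.
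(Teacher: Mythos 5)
Your proof is correct and is essentially the paper's argument in a slightly reorganized form: the paper first isolates the symmetry relation $\int_{\partial D} {}^t f_2\,\Phi f_1\,|dz| = \int_{\partial D} {}^t\!\left({}^t f_1\,\Phi^{\ast} f_2\right)|dz|$ between $\Phi$ and the Dirichlet-to-Neumann map $\Phi^{\ast}$ of ${}^t v$ (proved by extending the boundary data as solutions and applying the matrix Green formula), and your two applications of Green's identity with the auxiliary solution $\tilde\psi_2$ reproduce exactly that computation plus the final step, resting on the same key observation $\Delta({}^t u_2) = {}^t u_2\, v_2$. The only cosmetic difference is that by never invoking $\Phi_2^{\ast}$ you do not actually need Lemma \ref{lemtech} inside this proof (existence of $\tilde\psi_2$ uses \eqref{direig} for $v_2$ itself), whereas the paper uses that lemma to make $\Phi^{\ast}$ well defined.
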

\begin{proof}
If $v \in C^1(\bar D, \matn)$ is any matrix-valued potential (which satisfies \eqref{direig}) and $f_1, f_2 \in C^1(\partial D,\matn)$ then we have
\begin{equation} \label{symdtn1}
\int_{\partial D} {}^t f_2 \Phi f_1 |dz| = \int_{\partial D} {}^t \! \left( {}^t f_1 \Phi^{\ast} f_2 \right) |dz|,
\end{equation}
where $\Phi$ and $\Phi^{\ast}$ are the Dirichlet-to-Neumann operators associated to $v$ and ${}^t v$, respectively (these operators are well-defined thanks to Lemma \ref{lemtech}). Indeed, it is sufficient to extend $f_1$ and $f_2$ in $D$ as the solutions of the Dirichlet problems $(-\Delta + v)\tilde f_1=0$, $(-\Delta +{}^t  v)\tilde f_2=0$  on $D$ and $\tilde f_j |_{\partial D} = f_j$, for $j=1,2$, so that one obtains
\begin{align*}
&\int_{\partial D}\left( {}^t f_2 \Phi f_1 - {}^t \! \left( {}^t f_1 \Phi^{\ast} f_2 \right) \right) |dz|
\\
&\qquad =\int_{\partial D}\left( {}^t f_2 \frac{\partial \tilde f_1}{\partial \nu} - {}^t \! \left( \frac{\partial \tilde f_2}{\partial \nu} \right) f_1 \right)|dz|\\
&\qquad =\int_D \left( {}^t \tilde f_2\, \Delta \tilde f_1 - {}^t \! \left( \Delta \tilde f_2\right) \tilde f_1  \right) d\Ree z \, d \Imm z \\
&\qquad =\int_D \left( {}^t \tilde f_2\, v\,\tilde f_1 - {}^t \! \left( {}^t v \, \tilde f_2\right) \tilde f_1 \right) d\Ree z \, d \Imm z=0,
\end{align*}
where for the second equality we used the following matrix-valued version of the classical scalar Green's formula:
\begin{equation} \label{greenform}
\int_{\partial D} \left( {}^t \! \left( \frac{\partial f}{\partial \nu}\right) g - {}^t f \frac{\partial g}{\partial \nu}  \right) |dz| = \int_D \left( {}^t \! \left( \Delta f\right) g - {}^t f \Delta g \right) d \Ree z \, d\Imm z,
\end{equation}
for any $f,g \in C^2(D, \matn) \cap C^1(\bar D, \matn)$.

Identities \eqref{symdtn1} and \eqref{greenform} imply
\begin{align*}
&\int_{\partial D} {}^t u_2(z) (\Phi_2 - \Phi_1)u_1(z) |dz| \\
&\qquad = \int_{\partial D}\left({}^t \! \left( {}^t u_1(z)\Phi_2^{\ast} u_2(z) \right) - {}^t \! u_2(z)\Phi_1 u_1(z)  \right) |dz|\\
&\qquad = \int_{\partial D}\left({}^t \! \left( \frac{\partial u_2(z)}{\partial \nu} \right) u_1(z) - {}^t \! u_2(z) \frac{\partial u_1(z)}{\partial \nu}  \right) |dz|\\
&\qquad = \int_D \left( {}^t \! \left( \Delta u_2(z) \right)u_1(z) - {}^t u_2(z) \Delta u_1(z)  \right) d\Ree z \, d \Imm z \\
&\qquad = \int_D \left( {}^t \! \left({}^t v_2(z)\, u_2(z)\right)u_1(z) - {}^t u_2(z)\, v_1(z)\, u_1(z)  \right) d\Ree z \, d \Imm z \\
&\qquad = \int_D {}^t u_2(z) (v_2(z) -v_1(z)) u_1(z) d \Ree z \, d \Imm z. \qedhere
\end{align*}
\end{proof}

Now let $\bar\mu_{z_0}$ denote the complex conjugated of $\mu_{z_0}$ (the solution of \eqref{eq14}) for a $\matrn$-valued potential $v$ and, more generally, the solution of \eqref{eq14} with $g_{z_0}(z,\zeta,\lambda)$ replaced by $\overline{g_{z_0}(z,\zeta,\lambda)}$ for a $\matn$-valued potential $v$.
In order to make use of \eqref{aless3} we define
\begin{align*}
u_1(z) &= \psi_{1, z_0}(z,\lambda)=e^{\lambda (z-z_0)^2} \mu_1(z,\lambda), \\
u_2(z) &=\overline \psi_{2, z_0}(z,-\lambda)=e^{- \bar \lambda (\bar z-\bar z_0)^2} \bar \mu_2(z,-\lambda), 
\end{align*}
for $z_0 \in D$, $\lambda \in C$, $|\lambda| > \rho$ ($\rho$ is mentioned in section 2), where we called for simplicity $\mu_1 = \mu_{1,z_0}$, $\mu_2 = \mu_{2,z_0}$ and $\mu_{1,z_0}$, $\mu_{2,z_0}$ are the solutions of \eqref{eq14} with $v$ replaced by $ v_1$, ${}^t v_2$, respectively. 

%
Equation \eqref{aless3}, with the above-defined $u_1, u_2$, now reads
\begin{align} \label{aless4}
\int_{\partial D} \int_{\partial D} e^{-\bar \lambda (\bar z - \bar z_0)^2}\, {}^t \bar \mu_2(z,-\lambda) (\Phi_2 - \Phi_1)(z,\zeta) e^{\lambda (\zeta - z_0)^2} \mu_1(\zeta ,\lambda)|d\zeta|| d z |\\ \nonumber
= \int_D e_{\lambda,z_0} (z)\, {}^t \bar \mu_2(z,-\lambda) (v_2 - v_1)(z) \mu_1(z,\lambda) d \Ree z \, d \Imm z.
\end{align}
with $e_{\lambda,z_0}(z) = e^{\lambda(z-z_0)^2 - \bar \lambda (\bar z - \bar z_0)^2}$ and $(\Phi_2 - \Phi_1)(z,\zeta)$ is the Schwartz kernel of the operator $\Phi_2 -\Phi_1$.

The right side $I(\lambda)$ of \eqref{aless4} can be written as the sum of four integrals, namely
\begin{align*}
I_1(\lambda)&=\int_D e_{\lambda,z_0} (z)(v_2 - v_1)(z) d \Ree z \, d \Imm z, \\
I_2(\lambda)&=\int_D e_{\lambda,z_0}(z)\, {}^t \!(\bar \mu_2 -I) (v_2 - v_1)(z)( \mu_1 -I)  d \Ree z \, d \Imm z, \\
I_3(\lambda)&=\int_D e_{\lambda,z_0}(z)\, {}^t \!(\bar \mu_2 -I) (v_2 - v_1)(z) \, d \Ree z \, d \Imm z, \\
I_4(\lambda)&=\int_D e_{\lambda,z_0}(z)\, (v_2 - v_1)(z)( \mu_1 -I)  d \Ree z \, d \Imm z,
\end{align*}
for $z_0 \in D$.

The first term, $I_1$, can be estimated using Lemma \ref{lem2} as follows:
\begin{align} \label{est91}
&\left| \frac{2}{\pi}|\lambda| I_1 - (v_2(z_0) - v_1(z_0)) \right| \leq c_3( D,n) \frac{\log (3 |\lambda|)}{|\lambda|} \|v_2 - v_1\|_{C^2(\bar D)},
\end{align}
for $|\lambda| \geq 1$.
The other terms, $I_2, I_3, I_4$, satisfy, by Lemmata \ref{lem1} and \ref{lem4},
\begin{align} \label{est92}
&|I_2| \leq \left| \int_D e_{\lambda,z_0}(z)\, {}^t \!(\overline{g_{z_0,\lambda}}{}^t v_2) (v_2 - v_1)(z)(g_{z_0,\lambda}v_1)  d \Ree z \, d \Imm z \right|\\ \nonumber
&\qquad + O \left( \frac{\log(3|\lambda|)}{|\lambda|^2} \right)c_8(D,N,n), \\ \label{est93}
&|I_3| \leq  \left| \int_D e_{\lambda,z_0}(z)\, {}^t \!(\overline{g_{z_0,\lambda}} {}^t v_2) (v_2 - v_1)(z)  d \Ree z \, d \Imm z \right|\\ \nonumber
&\qquad + O \left( \frac{\log(3|\lambda|)}{|\lambda|^2} \right)c_9(D,N,n), \\ \label{est94}
&|I_4| \leq  \left| \int_D e_{\lambda,z_0}(z)\,(v_2 - v_1)(z)(g_{z_0,\lambda}v_1)  d \Ree z \, d \Imm z \right| \\ \nonumber
&\qquad + O \left( \frac{\log(3|\lambda|)}{|\lambda|^2} \right)c_{10}(D,N,n),
\end{align}
where $N$ is the costant in the statement of Theorem \ref{theo1} and $|\lambda|$ is sufficiently large, for example for $\lambda$ such that
\begin{align} \label{estlambda}
2n\frac{c_2(D)}{|\lambda|^{\frac 1 2 }} \le \frac 1 2 , \qquad |\lambda|\ge 1.
\end{align}

Lemmata \ref{lem5}, \ref{lem6}, applied to \eqref{est92}-\eqref{est94}, give us
\begin{align}
&|I_2| \leq c_{11}(D,N,n)\frac{\left( \log(3|\lambda|)\right)^2 }{|\lambda|^2}, \\
&|I_3| \leq c_{12}(D,N,n)\frac{\left(\log(3|\lambda|)\right)^2}{|\lambda|^{1+3/4}}, \\
&|I_4| \leq c_{13}(D,N,n)\frac{\left(\log(3|\lambda|)\right)^2}{|\lambda|^{1+3/4}}.
\end{align}

The left side $J(\lambda)$ of \eqref{aless4} can be estimated as follows:
\begin{align} \label{est95}
|\lambda||J(\lambda)| \leq c_{14}(D,n) e^{(2  L^2+1) |\lambda|} \|\Phi_2 - \Phi_1\|,
\end{align}
for $\lambda$ which satisfies \eqref{estlambda}, and $L = \max_{z \in \partial D, \; z_0 \in D} |z-z_0|$.

Putting together estimates \eqref{est91}-\eqref{est95} we obtain
\begin{align} \label{est96}
|v_2(z_0) - v_1(z_0)| \leq c_{15}(D,N,n) \frac{\left(\log(3 |\lambda|)\right)^2}{|\lambda|^{3/4}} + \frac{2}{\pi}c_{14}(D,n) e^{(2  L^2+1) |\lambda|} \|\Phi_2 - \Phi_1\|
\end{align}
for any $z_0 \in D$. We call $\varepsilon =  \|\Phi_2 - \Phi_1\|$ and impose $|\lambda| = \gamma \log (3+\varepsilon^{-1})$, where $0 < \gamma < (2L^2+1)^{-1}$ so that \eqref{est96} reads
\begin{align}
|v_2(z_0) - v_1(z_0)| &\leq c_{15}(D,N,n)(\gamma \log(3+\varepsilon^{-1}))^{-\frac 3 4} \left( \log(3 \gamma \log(3+\varepsilon^{-1}))\right)^2  \\ \nonumber
&\quad + \frac{2}{\pi}c_{14}(D,n) (3+ \varepsilon^{-1})^{(2  L^2+1)\gamma }\varepsilon,
\end{align}
for every $z_0 \in D$, with
\begin{equation} \label{esteps}
0 < \varepsilon \leq \varepsilon_1 (D, N, \gamma,n),
\end{equation}
where $\varepsilon_1$ is sufficiently small or, more precisely, where \eqref{esteps} implies that $|\lambda| =\gamma \log (3+ \varepsilon^{-1})$ satisfies \eqref{estlambda}.

As $(3+ \varepsilon^{-1})^{(2  L^2+1) \gamma} \varepsilon \to 0$ for $\varepsilon \to 0$ more rapidly then the other term, we obtain that
\begin{align} \label{estest}
\|v_2 -v_1\|_{L^{\infty}(D)} \leq c_{16}(D,N,\gamma,n) \frac{\left(\log (3\log (3+\|\Phi_2 - \Phi_1\|^{-1}))\right)^2}{\left(\log (3+ \|\Phi_2 - \Phi_1\|^{-1})\right)^{\frac 3 4}} 
\end{align}
for any $\varepsilon = \|\Phi_2 - \Phi_1\| \leq \varepsilon_1(D, N,\gamma,n)$.

Estimate \eqref{estest} for general $\varepsilon$ (with modified $c_{16}$) follows from \eqref{estest} for $\varepsilon \leq \varepsilon_1(D, N, \gamma,n)$ and the assumption that $\|v_j\|_{L^{\infty}(D)} \leq N, \; j = 1,2$. This completes the proof of Theorem \ref{theo1}. \qed

\section{Proofs of Lemmata \ref{lem5}, \ref{lem6}, \ref{lemtech}.} \label{seclem}

\begin{proof}[Proof of Lemma \ref{lem5}]
We decompose the operator $g_{z_0,\lambda}$, defined in \eqref{green}, as the product $\frac{1}{4}T_{z_0,\lambda}\bar T_{z_0,\lambda}$, where
\begin{align}
&T_{z_0,\lambda} u(z) = \frac{1}{\pi} \int_D \frac{e^{-\lambda(\zeta-z_0)^2+\bar \lambda(\bar \zeta - \bar z_0)^2}}{z-\zeta } u(\zeta)d \Ree\zeta \, d \Imm \zeta, \\ \label{deftbar}
&\bar T_{z_0,\lambda} u(z) = \frac{1}{\pi} \int_D \frac{e^{\lambda(\zeta-z_0)^2-\bar \lambda(\bar \zeta - \bar z_0)^2}}{\bar z -\bar \zeta} u(\zeta)  d \Ree\zeta \, d \Imm \zeta,
\end{align}
for $z_0,\lambda \in \C$. From the proof of \cite[Lemma 3.1]{NS} we have the estimate
\begin{align} \label{estbar}
\|\bar T_{z_0,\lambda} u \|_{C(\bar D)} \leq \frac{\eta_1(D)}{|\lambda|^{1/2}}\|u\|_{C(\bar D)}+\eta_2(D)\frac{\log(3|\lambda|)}{|\lambda|}\left\|\frac{\partial u}{\partial \bar z}\right\|_{C(\bar D)}, 
\end{align}
for $u \in C^1_{\bar z}(\bar D)$, $z_0 \in D$, $|\lambda| \geq 1$.
As the kernel of $T_{z_0,\lambda}$ and $\bar T_{z_0,\lambda}$ are conjugated each other we deduce immediately
\begin{align}
\|T_{z_0,\lambda} u \|_{C(\bar D)} \leq \frac{\eta_1(D)}{|\lambda|^{1/2}}\|u\|_{C(\bar D)}+\eta_2(D)\frac{\log(3|\lambda|)}{|\lambda|}\left\|\frac{\partial u}{\partial z}\right\|_{C(\bar D)}, \; |\lambda| \geq 1,
\end{align}
for $u \in C^1_{z}(\bar D)$.
Combining the two estimates we obtain
\begin{align*}
&\|g_{\lambda,z_0}u\|_{C(\bar D)}= \frac{1}{4}\|T_{z_0,\lambda} \bar T_{z_0,\lambda} u \|_{C(\bar D)} \\
&\qquad \leq \frac{1}{4}\left(\eta_1(D)\frac{\|\bar T_{z_0,\lambda}u\|_{C(\bar D)}}{|\lambda|^{1/2}}+\eta_2(D)\frac{\log(3|\lambda|)}{|\lambda|}\left\|\frac{\partial}{\partial z}\bar T_{z_0,\lambda}u\right\|_{C(\bar D)}\right)\\
&\qquad \leq \eta_3(D)\left(\frac{\|u\|_{C(\bar D)}}{|\lambda|}+\frac{\log(3|\lambda|)}{|\lambda|^{3/2}}\left\|\frac{\partial u}{\partial \bar z}\right\|_{C(\bar D)} +\frac{\log(3|\lambda|)}{|\lambda|}\|u\|_{C(\bar D)}\right) \\
&\qquad \leq \eta_4(D) \frac{\log(3|\lambda|)}{|\lambda|}\|u\|_{C^1_{\bar z}(\bar D)}, \qquad |\lambda| \geq 1,
\end{align*}
where we used the fact that $\| \frac{\partial}{\partial z}\bar T_{z_0,\lambda}u\|_{C(D)} = \|u\|_{C(D)}$.
\end{proof}

\begin{proof}[Proof of Lemma \ref{lem6}]
For $0<\varepsilon \leq 1$, $z_0 \in D$, let $B_{z_0,\varepsilon}=\{z \in \C : |z-z_0|\leq \varepsilon\}$. 
We write $W(u,v)(\lambda) = W^1(\lambda) + W^2(\lambda)$, where
\begin{align*}
&W^1(\lambda) =  \int_{D \cap B_{z_0,\varepsilon}}\! \! \! \! \! \! \! \! e^{\lambda (z-z_0)^2 -\bar \lambda (\bar z- \bar z_0)^2}  u(z) g_{z_0,\lambda}v(z) d \Ree z \, d \Imm z, \\
&W^2(\lambda) =  \int_{D \setminus B_{z_0,\varepsilon}}\! \! \! \! \! \! \! \! e^{\lambda (z-z_0)^2 -\bar \lambda (\bar z- \bar z_0)^2}  u(z) g_{z_0,\lambda}v(z) d \Ree z \, d \Imm z.
\end{align*}
The first term, $W^1$, can be estimated as follows:
\begin{align} \label{estranz}
|W^1(\lambda)| \leq \sigma_1(D,n) \|u\|_{C(\bar D)} \|v\|_{C^1_{\overline z}(\bar D)} \frac{\varepsilon^2  \log(3 |\lambda|)}{|\lambda|}, \qquad |\lambda| \geq 1,
\end{align}
where we used estimates \eqref{estsolmu} and \eqref{estcontg}.

For the second term, $W^2$, we proceed using integration by parts, in order to obtain 
\begin{align*}
W^2(\lambda) &= \frac{1}{4 i \bar \lambda }\int_{\partial (D \setminus B_{z_0,\varepsilon})} \! \! \! \! \! \! \! \! \! \! \! \! \! \! \! \! \! e^{\lambda (z-z_0)^2 -\bar \lambda (\bar z- \bar z_0)^2}\frac{u(z) g_{z_0,\lambda}v(z)}{\bar z - \bar z_0} dz\\
&\quad - \frac{1}{2 \bar \lambda}\int_{D \setminus B_{z_0,\varepsilon}} e^{\lambda (z-z_0)^2 -\bar \lambda (\bar z- \bar z_0)^2}\frac{\partial}{\partial \bar z}\left( \frac{u(z) g_{z_0,\lambda}v(z)}{\bar z-\bar z_0}\right) d \Ree z \, d \Imm z. 
\end{align*}
This imply
\begin{align} \label{ultima}
&|W^2(\lambda)| \leq \frac{1}{4 |\lambda|} \int_{\partial (D \setminus B_{z_0,\varepsilon})} \! \! \! \! \! \! \! \! \! \! \! \! \! \! \! \! \! \frac{\|u(z) g_{z_0,\lambda}v(z)\|_{C(\bar D)}}{|\bar z - \bar z_0|}|dz| \\   \nonumber
&\quad + \frac{1}{2|\lambda|}\left|  \int_{D \setminus B_{z_0,\varepsilon}}\! \! \! \! \! \! \! \! \! \! e^{\lambda (z-z_0)^2 -\bar \lambda (\bar z- \bar z_0)^2}\frac{\partial}{\partial \bar z}\left( \frac{u(z) g_{z_0,\lambda}v(z)}{\bar z-\bar z_0}\right) d \Ree z \, d \Imm z  \right|,
\end{align}
for $\lambda \neq 0$.
Again by estimates \eqref{estsolmu} and \eqref{estcontg} we obtain
\begin{align} \label{perdio}
&|W^2(\lambda)| \leq \sigma_2(D,n) \|u\|_{C^1_{\overline z}(\bar D)} \|v\|_{C^1_{\overline z}(\bar D)} \frac{\log(3 \varepsilon^{-1}) \log(3 |\lambda|) }{|\lambda|^{2}} \\ \nonumber
&\qquad \qquad+  \frac{1}{8|\lambda|} \left|\int_{D \setminus B_{z_0,\varepsilon}}u(z) \frac{\bar T_{z_0, \lambda}v(z)}{\bar z-\bar z_0} d\Ree z \, d \Imm z \right|, \qquad |\lambda| \geq 1,
\end{align}
where we used the fact that $\frac{\partial}{\partial \bar z}g_{z_0,\lambda}v(z) =\frac 1 4  e^{-\lambda (z-z_0)^2 +\bar \lambda (\bar z- \bar z_0)^2}\bar T_{z_0, \lambda}v(z)$, with $\bar T_{z_0,\lambda}$ defined in \eqref{deftbar}.

The last term in \eqref{perdio} can be estimated independently on $\varepsilon$ by
\begin{equation} \label{estopam}
\sigma_3(D,n)\|u\|_{C(\bar D)} \|v\|_{C^1_{\bar z}(\bar D)} \frac{\log(3|\lambda|)}{|\lambda|^{1+3/4}}.
\end{equation}
This is a consequence of \eqref{estbar} and of the estimate
\begin{equation} \label{estbar2}
|\bar T_{z_0,\lambda}u(z)| \leq \frac{\log (3 |\lambda|)(1+|z-z_0|)\tau_1(D)}{|\lambda| |z- z_0|^2}\|u\|_{C^1_{\bar z}(\bar D)}, \; |\lambda| \geq 1,
\end{equation}
for $u \in C^1_{\bar z}(\bar D)$, $z,z_0 \in D$ (a proof of \eqref{estbar2} can be found in the proof of \cite[Lemma 3.1]{NS}).

Indeed, for $0< \delta \leq \frac{1}{2}$ we have
\begin{align*}
&\left| \int_D u(z)\frac{\bar T_{z_0, \lambda} v(z)}{\bar z - \bar z_0} d \Ree z \, d \Imm z \right|  \\ \nonumber
&\quad \leq \int_{B_{z_0,\delta} \cap D} \! \! \! \! \! \! \! \! \! \! \! \! |u(z)| \frac{|\bar T_{z_0, \lambda} v(z)|}{|z - z_0|} d \Ree z \, d \Imm z + \int_{D \setminus B_{z_0,\delta}} \! \! \! \! \! \! \! \! \!  \! \! \! |u(z)| \frac{|\bar T_{z_0, \lambda} v(z)|}{|z - z_0|} d \Ree z \, d \Imm z \\
&\quad \leq  \|u\|_{C(\bar D)} \|v\|_{C^1_{\bar z}(\bar D)} \frac{\tau_2(D,n)}{|\lambda|^{1/2}} \int_{B_{z_0,\delta} \cap D} \frac{d \Ree z \, d \Imm z}{|z - z_0|} \\
&\qquad+ \|u\|_{C(\bar D)} \|v\|_{C^1_{\bar z}(\bar D)}\frac{\log(3|\lambda|)}{|\lambda|}\tau_3(D,n)\int_{D \setminus B_{z_0,\delta}}  \frac{d \Ree z \, d \Imm z}{|z - z_0|^3} \\
&\quad \leq  2 \pi \|u\|_{C(\bar D)} \|v \|_{C^1_{\bar z}(\bar D)}\tau_2(D,n) \frac{ \delta}{|\lambda|^{\frac 1 2}} + \|u\|_{C(\bar D)}\|v\|_{C^1_{\bar z}(\bar D)}\tau_4(D,n) \frac{\log (3 |\lambda|)}{|\lambda| \delta}, 
\end{align*}
for $|\lambda| \geq 1$. Putting $\delta = \frac 1 2 |\lambda|^{-1/4}$ in the last inequality gives \eqref{estopam}.

Finally, defining $\varepsilon=|\lambda|^{-1/2}$ in \eqref{perdio}, \eqref{estranz} and using \eqref{estopam}, we obtain the main estimate \eqref{estdoppiavu}, which thus finishes the proof of Lemma \ref{lem6}.
\end{proof}

\begin{proof}[Proof of Lemma \ref{lemtech}]
Take $u \in H^1(D,\matn)$ such that $(-\Delta + {}^t v) u = 0$ on $D$ and $u|_{\partial D} =0$. We want to prove that $u \equiv 0$ on $D$.

By our hypothesis, for any $f \in C^1(\partial D, \matn)$ there exists a unique $\tilde f \in H^1(D, \matn)$ such that $(-\Delta + v)\tilde f =0$ on $D$ and $\tilde f|_{\partial D}=f$. Thus we have, using Green's formula \eqref{greenform},
\begin{align*}
&\int_{\partial D} {}^t \! \left(\frac{\partial u}{\partial \nu} \right) f |dz| =  \int_{D} \left( {}^t \! \left( \Delta u \right) \tilde f - {}^t u \Delta \tilde f \right) d \Ree z \, d\Imm z \\
&\qquad = \int_D \left( {}^t \! \left({}^t v \, u \right) \tilde f - {}^t u \, v \, \tilde f \right) d \Ree z \, d\Imm z = 0
\end{align*}
which yields $\frac{\partial u}{\partial \nu}|_{\partial D} = 0$. Now consider the following straightforward generalization of Green's formula \eqref{greenform},
\begin{align}
\int_{\partial D}\left( {}^t \! \left( \frac{\partial f}{\partial \nu}\right) g - {}^t f \frac{\partial g}{\partial \nu} \right) |dz| = \int_D {}^t \! \left( (\Delta - {}^t v) f\right) g - {}^t f \left( (\Delta -v) g\right)  d \Ree z \, d\Imm z,
\end{align}
which holds (weakly) for any $f,g \in H^1(D, \matn)$. If we put $f = u$ we obtain
\begin{equation}
\int_D {}^t u \,(-\Delta+ v) g \, d \Ree z \, d\Imm z = 0
\end{equation}
for any $g \in H^1(D, \matn)$. By Fredholm alternative (see \cite[Sec. 6.2]{E}), for each $h \in L^2(D, \matn)$ there exists a unique $g \in H_0^1(D, \matn)=\{ g \in H^1(D, \matn) : g|_{\partial D}=0 \}$ such that $(-\Delta + v)g =h$: this yields $u \equiv 0$ on $D$. Thus Lemma \ref{lemtech} is proved.
\end{proof}

\section{An extension of Theorem \ref{theo1}} \label{secext}

As an extension of Theorem \ref{theo1} for the case when we do not assume that $v_j|_{\partial D} \equiv 0, \; \frac{\partial}{\partial \nu} v_j|_{\partial D} \equiv 0, \; j= 1,2$, we give the following result.
\begin{prop} \label{mainprop}
Let $D \subset \R^2$ be an open bounded domain with $C^2$ boundary, let $v_1 , v_2 \in C^2(\bar D,\matn)$ be two matrix-valued potentials which satisfy \eqref{direig}, with $\|v_j \|_{C^2(\bar D)} \le N$ for $j =1,2$, and $\Phi_1 , \Phi_2$ the corresponding Dirichlet-to-Neumann operators. Then, for any $0 < \alpha < \frac 1 5$, there exists a constant $C = C(D, N, n,\alpha)$ such that
\begin{equation} \label{estfaible}
\|v_2 - v_1\|_{L^{\infty}(D)} \leq C \left(\log(3 + \|\Phi_2 - \Phi_1\|_1^{-1} )\right)^{-\alpha},
\end{equation}
where $\| A\|_1$ is the norm for an operator $A : L^{\infty}(\partial D, \matn) \to L^{\infty}(\partial D,\matn)$, with kernel $A(x,y)$, defined as $\|A\|_1 = \sup_{x,y \in \partial D} |A(x,y)| (\log(3 + |x-y|^{-1}))^{-1}$ and $|A(x,y)| = \max_{1 \leq i,j \leq n} |A_{i,j}(x,y)|$.
\end{prop}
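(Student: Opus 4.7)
The strategy is to reduce Proposition \ref{mainprop} to Theorem \ref{theo1} by extending the potentials to a slightly larger domain and controlling the new Dirichlet-to-Neumann difference in terms of the original one; this parallels the scalar argument of \cite{NS}, and the matrix-valued machinery already developed for Theorem \ref{theo1} carries over without essential change. Fix an open bounded $D_1 \supset \bar D$ with $C^2$ boundary, chosen close enough to $D$ that the eigenvalue condition \eqref{direig} is preserved for any $C^2$ extension of $v_j$ whose norm is controlled by $N$. Using a Seeley/Whitney extension followed by multiplication by a smooth cutoff equal to $1$ on $\bar D$ and supported away from $\partial D_1$, construct $\tilde v_j \in C^2(\bar D_1, \matn)$ with $\tilde v_j|_{\bar D} = v_j$, $\tilde v_j|_{\partial D_1} = 0$, $\partial_\nu \tilde v_j|_{\partial D_1} = 0$, and $\|\tilde v_j\|_{C^2(\bar D_1)} \le C_0(D, D_1, N)$. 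Then $\tilde v_1, \tilde v_2$ satisfy all hypotheses of Theorem \ref{theo1} on $D_1$, and applying it and restricting to $D$ yields
\begin{equation*}
\|v_2 - v_1\|_{L^{\infty}(D)} \le C \bigl( \log(3 + \|\tilde\Phi_2 - \tilde\Phi_1\|^{-1}) \bigr)^{-3/4} \bigl( \log(3 \log(3 + \|\tilde\Phi_2 - \tilde\Phi_1\|^{-1})) \bigr)^2,
\end{equation*}
where $\tilde\Phi_j$ is the Dirichlet-to-Neumann operator on $\partial D_1$ for $\tilde v_j$.

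The crux is to bound $\|\tilde\Phi_2 - \tilde\Phi_1\|$ in terms of $\|\Phi_2 - \Phi_1\|_1$. For $f \in L^{\infty}(\partial D_1, \matn)$, let $\tilde\psi_j$ solve the Dirichlet problem for $-\Delta + \tilde v_j$ on $D_1$ with trace $f$. Since $\tilde v_j|_D = v_j$, one has $\partial_\nu \tilde\psi_j|_{\partial D} = \Phi_j(\tilde\psi_j|_{\partial D})$; on the annulus $A := D_1 \setminus \bar D$ the function $\tilde\psi_j$ solves a forward problem whose potential is known (determined by the extension). A Green-function/layer-potential representation on $A$ expresses $\tilde\Phi_j f$ as an explicit integral operator on $f$ coupled to the Cauchy data of $\tilde\psi_j$ on $\partial D$. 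Subtracting $j = 1, 2$ splits $(\tilde\Phi_1 - \tilde\Phi_2) f$ into a piece involving $(\Phi_1 - \Phi_2)$ acting on a smooth trace, and a piece involving $(\tilde v_1 - \tilde v_2)|_A$. The first is bounded by $\|\Phi_1 - \Phi_2\|_1 \, \|f\|_\infty$ precisely because the $\log$-modulated definition of $\|\cdot\|_1$ is the right norm to pair with the logarithmic kernels of the layer potentials on the annulus.

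The second piece is controlled by a boundary-determination argument that bounds $v_j|_{\partial D}$ and $\partial_\nu v_j|_{\partial D}$ in terms of $\|\Phi_2 - \Phi_1\|_1$ (via the standard pseudodifferential/symbol-calculus analysis on $\partial D$ in the spirit of Sylvester--Uhlmann/Alessandrini), and then making the construction of $\tilde v_j$ depend only on these recovered traces plus a fixed cutoff, so that control of the boundary data translates into control of $\|\tilde v_1 - \tilde v_2\|_{C(A)}$. Assembling everything yields $\|\tilde\Phi_2 - \tilde\Phi_1\| \le C_2(D, D_1, N) \, \kappa(\|\Phi_2 - \Phi_1\|_1)$ for an explicit $\kappa$; fed into the inequality above and balanced against the width of $A$, the extension-constant blow-up, and the propagation losses, this produces the claimed logarithmic estimate with any exponent $\alpha < 1/5$. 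The main obstacle is precisely this balancing: Theorem \ref{theo1} provides the interior bound as a black box, but the quantitative interplay between the propagation on $A$ and the boundary-trace recovery from $\|\Phi_2 - \Phi_1\|_1$ must be tracked carefully enough to preserve a positive power of $\log \|\Phi_2 - \Phi_1\|_1^{-1}$, and it is this interplay that pins the exponent at $1/5$.
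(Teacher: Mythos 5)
Your proposal takes a route that is genuinely different from the paper's, and it has gaps that I do not think can be closed as written. The paper never extends the potentials to a larger domain. Instead it works entirely inside $D$: it splits $\|v_2-v_1\|_{L^\infty(D)}$ into the boundary layer $D\cap(\partial D)_\delta$ and the interior $D\setminus(\partial D)_\delta$; on the layer it uses $|v_2-v_1|\le 2N\delta+\|v_2-v_1\|_{C(\partial D)}$ together with the boundary-determination property \emph{ii)}, $\|v_2-v_1\|_{C(\partial D)}\le const(n)\,\|\Phi_2-\Phi_1\|_1$; in the interior it reruns the proof of Theorem \ref{theo1} with Lemma \ref{lem2} replaced by Lemma \ref{newlem2}, whose error term carries an explicit $\kappa_1\delta^{-4}\frac{\log(3|\lambda|)}{|\lambda|}$ plus $\kappa_2\log(3+\delta^{-1})\|v\|_{C(\partial D)}$. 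Choosing $\delta=\bigl(\log(3+\|\Phi_2-\Phi_1\|_1^{-1})\bigr)^{-\alpha}$ balances the two regions, and the exponent restriction comes from requiring $4\alpha-1<-\alpha$, i.e.\ $\alpha<\frac15$. So the threshold $\frac15$ has a concrete arithmetic origin; in your write-up it is simply asserted that the "balancing pins the exponent at $1/5$" with no computation that could produce that number, which is the clearest sign the argument is not actually carried out.

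Beyond that, three steps of your reduction are genuinely problematic. First, condition \eqref{direig} for the extended potentials on $D_1$ is not implied by the hypotheses: the distance of $0$ from the Dirichlet spectrum of $-\Delta+v_j$ in $D$ is not bounded below in terms of $D$, $N$, $n$ alone, so no choice of $D_1$ "close enough to $D$" works uniformly over the class of admissible $v_j$, and the constant in \eqref{estfaible} is not allowed to depend on the spectral gap. Second, the bound $\|\tilde\Phi_2-\tilde\Phi_1\|\le C\,\kappa(\|\Phi_2-\Phi_1\|_1)$ is the entire difficulty of your approach and is only sketched; passing the smallness of the Cauchy-data difference from $\partial D$ outward to $\partial D_1$ through the annulus is itself an ill-posed continuation problem (it is not a bounded layer-potential computation), and it generically costs at least a logarithm that you have not tracked. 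Third, an extension of $v_j$ built only from the recovered traces $v_j|_{\partial D}$ and $\partial_\nu v_j|_{\partial D}$ (which you need so that $\tilde v_1-\tilde v_2$ on the annulus is controlled by boundary data) is at best $C^{1,1}$ across $\partial D$, while Theorem \ref{theo1} requires $C^2$ potentials with a $C^2$ bound. I recommend abandoning the extension strategy and redoing the proof via the boundary-layer decomposition and Lemma \ref{newlem2}.
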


The only properties of $\| \: \|_1$ we will use are the following:
\begin{itemize}
\item[\it i)] $\|A \|_{L^{\infty}(\partial D) \to L^{\infty}(\partial D)} \leq const(D,n) \|A\|_1$;
\item[\it ii)] In a similar way as in formula (4.9) of \cite{N1} one can deduce
\begin{equation} \nonumber
\|v\|_{L^{\infty}(\partial D)} \leq const(n) \|\Phi_{v} - \Phi_0\|_1,
\end{equation}
for a matrix-valued potential $v$, $\Phi_v$ its associated Dirichlet-to-Neu\-mann operator and $\Phi_0$ the Dirichlet-to-Neumann operator of the $0$ potential.
\end{itemize}

We recall a lemma from \cite{NS}, which generalize Lemma \ref{lem2} to the case of potentials without boundary conditions. We define $(\partial D)_{\delta} = \{ z \in \C : dist(z, \partial D) < \delta \}$.

\begin{lem} \label{newlem2}
For $v \in C^2(\bar D)$ we have that
\begin{align} \label{newestm}
\left|v(z_0) - \frac{2}{\pi}|\lambda| h^{(0)}_{z_0}(\lambda)\right| &\leq \kappa_1(D,n)\delta^{-4} \frac{\log(3|\lambda|)}{|\lambda|}\|v\|_{C^2(\bar D)}\\ \nonumber
&\qquad + \kappa_2(D,n)\log(3+ \delta^{-1})\|v\|_{C( \partial D)},
\end{align}
for $z_0 \in D \setminus (\partial D)_{\delta}$, $0 < \delta < 1$, $\lambda \in \C$, $|\lambda| \geq 1$.
\end{lem}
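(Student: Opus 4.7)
\emph{Plan.} The strategy is to reduce to Lemma \ref{lem2} by cutting off $v$ away from $\partial D$ and to treat the remainder by repeated integrations by parts. Fix a smooth cutoff $\chi_\delta:\bar D\to[0,1]$ with $\chi_\delta\equiv 0$ on $(\partial D)_{\delta/2}$, $\chi_\delta\equiv 1$ on $D\setminus(\partial D)_\delta$, and $\|\chi_\delta\|_{C^k(\bar D)}\le C(D)\delta^{-k}$ for $k=0,1,2$. Decompose $v=v^\sharp+v^\flat$ with $v^\sharp=\chi_\delta v$ and $v^\flat=(1-\chi_\delta)v$; then $v^\sharp$ vanishes identically in $(\partial D)_{\delta/2}$ (so $v^\sharp|_{\partial D}=0$ and $\partial_\nu v^\sharp|_{\partial D}=0$), $v^\flat\equiv v$ on $\partial D$, and $\|v^\sharp\|_{C^2(\bar D)}\le C(D)\delta^{-2}\|v\|_{C^2(\bar D)}$ by Leibniz.

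Since $z_0\in D\setminus(\partial D)_\delta$ we have $v^\sharp(z_0)=v(z_0)$ and $h^{(0)}_{z_0}(\lambda)$ splits linearly as a $v^\sharp$-part plus a $v^\flat$-part. Lemma \ref{lem2} applied to $v^\sharp$ bounds the $v^\sharp$-contribution by $c_3\,C(D)\,\delta^{-2}\log(3|\lambda|)|\lambda|^{-1}\|v\|_{C^2(\bar D)}$, which is absorbed in the first summand of \eqref{newestm}. For the $v^\flat$-contribution, using $e_{\lambda,z_0}(z)=-\frac{1}{2\bar\lambda(\bar z-\bar z_0)}\partial_{\bar z}e_{\lambda,z_0}(z)$ (valid for $z\ne z_0$; the infinitesimal excised-disc contribution vanishes because $v^\flat(z_0)=0$ with $|v^\flat(z)|=O(\delta^{-1}|z-z_0|)$ near $z_0$), one $\bar\partial$-integration by parts via the complex Green's identity produces
\begin{align*}
\int_D e_{\lambda,z_0}(z)\,v^\flat(z)\,d\Ree z\,d\Imm z
&=-\frac{1}{4i\bar\lambda}\int_{\partial D}e_{\lambda,z_0}(z)\,\frac{v(z)}{\bar z-\bar z_0}\,dz\\
&\quad+\frac{1}{2\bar\lambda}\int_D e_{\lambda,z_0}(z)\,\partial_{\bar z}\!\left(\frac{v^\flat(z)}{\bar z-\bar z_0}\right)d\Ree z\,d\Imm z,
\end{align*}
where $v^\flat\equiv v$ on $\partial D$ because $\chi_\delta\equiv 0$ there.

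The boundary integral, multiplied by $|\lambda|$, gives precisely the second summand of \eqref{newestm}: $|e_{\lambda,z_0}|=1$ on $\partial D$ (the phase is purely imaginary), and the arc-length estimate $|z(s)-z_0|^2\gtrsim s^2+\delta^2$ near the boundary point closest to $z_0$ yields $\int_{\partial D}|dz|/|z-z_0|\le C(D)\log(3+\delta^{-1})$, hence a bound of the form $\kappa_2(D,n)\log(3+\delta^{-1})\|v\|_{C(\partial D)}$. The residual volume integral is by itself $|\lambda|$-independent once we multiply by $|\lambda|$, so a second integration by parts is needed: this time I would use $e_{\lambda,z_0}=\frac{1}{2\lambda(z-z_0)}\partial_z e_{\lambda,z_0}$ (for $z\ne z_0$) to gain an additional $|\lambda|^{-1}$ factor, at the cost of introducing higher-order singular kernels $1/|z-z_0|^2,1/|z-z_0|^3$ on $\partial D$ and inside $D$. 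Estimating these using $\|D^j v^\flat\|_\infty\le C\delta^{-j}\|v\|_{C^j(\bar D)}$ for $j=0,1,2$ and the integrability of $|z-z_0|^{-k}$ against $\partial D$ and $(\partial D)_\delta$, one ends up with a contribution of order $\delta^{-4}\log(3|\lambda|)|\lambda|^{-1}\|v\|_{C^2(\bar D)}$, absorbed into the first summand of \eqref{newestm}.

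\emph{Main obstacle.} The delicate point is the residual volume integral after the first IBP: it carries no $|\lambda|^{-1}$ decay on its own, so a second integration by parts is required, and this introduces singular kernels $1/|z-z_0|^2, 1/|z-z_0|^3$ that must be carefully balanced against the $\delta^{-k}$ losses coming from differentiating $\chi_\delta$. Since $z_0$ can sit on the inner edge $\partial(\partial D)_\delta$, so that $\mathrm{supp}\,v^\flat$ touches $z_0$, these kernels are at the borderline of $L^1_{\mathrm{loc}}$-integrability in the plane; the accumulated $\delta^{-4}$ scaling in \eqref{newestm} reflects precisely this balance between the two IBP's and the cutoff derivative losses.
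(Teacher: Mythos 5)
The paper itself does not prove this lemma: it only states that the scalar case is contained in \cite{NS} and that the matrix-valued generalization is straightforward. Your argument is therefore necessarily a different, self-contained route, and its skeleton is sound: splitting $v=\chi_\delta v+(1-\chi_\delta)v$, applying Lemma \ref{lem2} to the interior piece (which costs a factor $\delta^{-2}$ through $\|\chi_\delta v\|_{C^2(\bar D)}$), and performing one $\bar z$-integration by parts on the boundary-layer piece so that the $\partial D$-term --- where $|e_{\lambda,z_0}|=1$ and $|z-z_0|\ge\delta$ --- produces exactly the summand $\kappa_2\log(3+\delta^{-1})\|v\|_{C(\partial D)}$, is a correct plan and is consistent with how the lemma is used in Proposition \ref{mainprop} (only boundary values of $v$ enter the second summand).

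The one step that does not survive as written is the treatment of the residual volume integral. With your cutoff scales ($\chi_\delta\equiv1$ on $D\setminus(\partial D)_\delta$, $\equiv 0$ on $(\partial D)_{\delta/2}$), the support of $v^\flat$ may touch $z_0$ when $\mathrm{dist}(z_0,\partial D)=\delta$, and after the second integration by parts the worst kernel is $v^\flat(z)/|z-z_0|^4$. Even using the second-order vanishing $|v^\flat(z)|\le C\delta^{-2}\|v\|_{C^2(\bar D)}|z-z_0|^2$ at $z_0$, this is only $O\bigl(\delta^{-2}|z-z_0|^{-2}\bigr)$, which is \emph{not} integrable over a planar neighbourhood of $z_0$; the asserted ``integrability of $|z-z_0|^{-k}$ against $(\partial D)_\delta$'' fails for $k\ge2$ in dimension two, so the claimed $\delta^{-4}\log(3|\lambda|)|\lambda|^{-1}$ bound for this term is not actually justified by the estimates you invoke. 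The repair is cheap: choose the cutoff so that $\chi_\delta\equiv1$ outside $(\partial D)_{\delta/2}$ and $\equiv0$ on $(\partial D)_{\delta/4}$. Then $\mathrm{dist}(z_0,\mathrm{supp}\,v^\flat)\ge\delta/2$, every kernel is nonsingular on the support, and in fact the second hand-made integration by parts becomes unnecessary: the function $w=\partial_{\bar z}\bigl(v^\flat/(\bar z-\bar z_0)\bigr)$ then lies in $C^1_{\bar z}(\bar D)$ with $\|w\|_{C^1_{\bar z}(\bar D)}\le C\delta^{-3}\|v\|_{C^2(\bar D)}$, so Lemma \ref{lem3} applied to $W_{z_0}$ with this $w$ bounds the remaining term by $C\delta^{-3}\log(3|\lambda|)|\lambda|^{-1}\|v\|_{C^2(\bar D)}$, which is absorbed into (indeed improves) the first summand of \eqref{newestm}.
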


The proof of Lemma \ref{newlem2} for the scalar case can be found in \cite{NS}: the generalization to the matrix-valued case is straightforward.

\begin{proof}[Proof of Proposition \ref{mainprop}]
Fix $0 < \alpha < \frac{1}{5}$ and $0 < \delta < 1$. We have the following chain of inequalities
\begin{align*}
&\|v_2 - v_1\|_{L^{\infty}(D)}\\
&\qquad = \max(\|v_2 - v_1\|_{L^{\infty}(D \cap (\partial D)_{\delta})}, \|v_2 - v_1\|_{L^{\infty}(D \setminus (\partial D)_{\delta})}) \\
&\qquad \leq C_1 \max \left( 2 N\delta + \|\Phi_2 - \Phi_1\|_1 , \frac{\log (3 \log(3+\| \Phi_2 -\Phi_1\|^{-1}))}{\delta^4 \log(3 + \|\Phi_2 - \Phi_1\|^{-1})} \right. \\ 
&\qquad \qquad \left. +\log(3+ \frac 1 \delta) \|\Phi_2 - \Phi_1\|_1 + \frac{\left(\log (3 \log(3+\| \Phi_2 -\Phi_1\|^{-1}))\right)^2}{(\log(3 + \|\Phi_2 - \Phi_1\|^{-1}))^{\frac 3 4}}\right) \\
&\qquad \leq C_2 \max \left( 2 N\delta + \|\Phi_2 - \Phi_1\|_1 , \frac{1}{\delta^4}\left( \log(3 + \|\Phi_2 - \Phi_1\|_1^{-1})\right)^{-5\alpha } \right. \\
&\qquad \qquad \left. +\log(3+ \frac 1 \delta) \|\Phi_2 - \Phi_1\|_1 +\frac{\left(\log (3 \log(3+\| \Phi_2 -\Phi_1\|_1^{-1}))\right)^2}{(\log(3 + \|\Phi_2 - \Phi_1\|_1^{-1}))^{\frac 3 4}} \right),
\end{align*}
where we followed the scheme of the proof of Theorem \ref{theo1} with the following modifications: we made use of Lemma \ref{newlem2} instead of Lemma \ref{lem2} and we also used {i)-ii)}; note that $C_1 = C_1(D, N,n)$ and $C_2 = C_2(D, N, n, \alpha)$.

Putting $\delta = \left(\log (3 + \| \Phi_2 - \Phi_1\|_1^{-1})\right)^{-\alpha}$ we obtain the desired inequality
\begin{align} \label{estfinal}
&\|v_2 - v_1\|_{L^{\infty}(D)} \leq C_3 \left(\log(3 + \|\Phi_2 - \Phi_1\|_1^{-1} )\right)^{-\alpha},
\end{align}
with $C_3 = C_3(D, N,n, \alpha)$, $ \|\Phi_2 - \Phi_1\|_1 = \varepsilon \leq \varepsilon_1(D,N,n,\alpha)$ with $\varepsilon_1$ sufficiently small or, more precisely when $\delta_1 = \left(\log (3 + \varepsilon_1^{-1})\right)^{-\alpha}$ satisfies: 
$$\delta_1 < 1,  \qquad \varepsilon_1 \leq 2N \delta_1,  \qquad \log(3+\frac{1}{\delta_1}) \varepsilon_1 \leq \delta_1.$$

Estimate \eqref{estfinal} for general $\varepsilon$ (with modified $C_3$) follows from \eqref{estfinal} for $\varepsilon \leq \varepsilon_1(D,N,n,\alpha)$ and the assumption that $\|v_j\|_{L^{\infty}(\bar D)} \leq N$ for $j=1,2$. This completes the proof of Proposition \ref{mainprop}.
\end{proof}

\end{document}